\newtheorem{theorem}{Theorem}[section]
\newtheorem{lemma}{Lemma}[section]
\newtheorem{de}{Definition}[section]
\newtheorem{prop}{Proposition}[section]
\newcommand{\G}{\Gamma_{a,b}(2,2)}
\newcommand{\CC}{\mathcal{C}_\nu}
\newcommand{\TT}{\mathrm{Tr}_{\mathbb{F}_{q^n}/\mathbb{F}_q}}
\newcommand{\RK}{$r$-primitive $k$-normal }
\title[On $r$-primitive $k$-normal polynomials with two prescribed coefficients]{On $r$-primitive $k$-normal polynomials with two prescribed coefficients over finite fields}
\author{Avnish K. Sharma}
\address{Department of Mathematics, Shri Ram College of Commerce, University of Delhi, New Delhi-110007, India}
\email{avkush94@gmail.com}
\author{Mamta Rani}
\address{Department of Mathematics, University of Delhi, New Delhi-110007, India}
\email{mamta11singla@gmail.com} 
\author{Sharwan K. Tiwari$^*$}
\address {Cryptography Research Center, Technology Innovation Institute, Abu Dhabi, UAE}
\email{shrawant@gmail.com}
\thanks{$^*$ Corresponding author}
\author{Anupama Panigrahi}
\address{Department of Mathematics, University of Delhi, New Delhi-110007, India}
\email{anupama.panigrahi@gmail.com} 
\keywords{Finite fields; $r$-Primitive elements; $k$-Normal elements; Minimal polynomials; Norm and Trace; Additive and Multiplicative characters.}
\subjclass{12E20, 11T23.}
\begin{document}
	\maketitle
	\sloppy	
		
	\begin{abstract}
		This article investigates the existence of an \RK  polynomial, defined as the minimal polynomial of an $r$-primitive $k$-normal element in $\mathbb{F}_{q^n}$, with a specified degree $n$ and two given coefficients over the finite field $\mathbb{F}_{q}$. Here, $q$ represents an odd prime power, and $n$ is an integer. The article establishes a sufficient condition to ensure the existence of such a polynomial. Using this condition, it is demonstrated that a $2$-primitive $2$-normal polynomial of degree $n$ always exists over $\mathbb{F}_{q}$ when both $q\geq 11$ and $n\geq 15$. However, for the range $10\leq n\leq 14$, uncertainty remains regarding the existence of such a polynomial for $71$ specific pairs of $(q,n)$. Moreover, when $q<11$, the number of uncertain pairs reduces to $16$. Furthermore, for the case of $n=9$, extensive computational power is employed using SageMath software, and it is found that the count of such uncertain pairs reduced to $3988$.
	\end{abstract}

\section{Introduction}\label{sec1}
	In this article, we consider finite fields $\mathbb{F}_{q^n}$, the $n$-th degree extension field of $\mathbb{F}_{q}$, with $q=p^m$, where $p$ is an odd prime and $m,n$ are positive integers. We define an element $\alpha$ of the multiplicative group $\mathbb{F}_{q^n}^*$ as an $r$-primitive element if its multiplicative order is $\frac{q^n-1}{r}$, where $r\mid q^n-1$. If $r=1$, then $r$-primitive elements are exactly the primitive elements over $\mathbb{F}_{q}$. Furthermore, $\alpha$ is called normal over $\mathbb{F}_q$ if its conjugates ${\alpha,\alpha^q,\ldots,\alpha^{q^{n-1}}}$ form a basis of $\mathbb{F}_{q^n}$ as a vector space over $\mathbb{F}_q$. The term $k$-normal element was introduced by \cite{huc13} and refers to an element whose conjugates span a subspace of dimension $n-k$ over $\mathbb{F}_q$. If $k=0$, then the $k$-normal elements are exactly the normal elements over $\mathbb{F}_q$. We call an element \RK if it is $r$-primitive as well as $k$-normal. In addition, the corresponding minimal polynomials of a primitive, normal or primitive normal elements are called primitive, normal or primitive normal polynomials, respectively. 
	
	Extensive research has been conducted on the existence of primitive, normal, or primitive normal elements with specific properties like norm and trace over finite fields. These investigations have been motivated by their applications in various domains, including Coding theory, Cryptography, and signal processing. For a comprehensive understanding, interested readers are encouraged to refer to \cite{Lenstra,cao2014prim,cohen2006strong,gauss,kapetanakis2019variations,Avnish1,Avnish2,Rani1}. The norms and traces of elements correspond to certain coefficients in their minimal polynomials, which has sparked the interest of researchers in exploring special polynomials with predetermined coefficients.
	
	In 1992, Hansen and Mullen proposed a conjecture (Hansen-Mullen conjecture) stating that, for any $n\geq 2$, there exists a primitive polynomial of degree $n$, denoted as $f(x) = x^n - \sigma_1 x^{n-1} + \ldots + (-1)^n \sigma_n$. In this polynomial, the coefficients $\sigma_m$ can take values from the finite field $\mathbb{F}_q$, except for certain cases: $(q, n, m, a) = (q, 2, 1, 0)$, $(4, 3, 1, 0)$, $(4, 3, 2, 0)$, and $(2, 4, 2, 1)$. In 2004, Fan and Han \cite{fan1} introduced the $p$-adic method and demonstrated that the Hansen-Mullen conjecture holds approximately, with the only possible exceptions being when $m$ equals $\frac{n+1}{2}$ if $n$ is odd and when $m$ equals $\frac{n}{2}$ or $\frac{n}{2} + 1$ if $n$ is even. They also established in their work, \cite{fan2}, that for $n \geq 7$, the conjecture holds true when $q$ is an even prime power and $n$ is odd. Later, \cite{cohenprimpoly} proved that the Hansen-Mullen conjecture holds for $n\geq 9$, and in a subsequent paper \cite{cohenhansen}, they extended these results to all $n\geq 2$. Furthermore, \cite{fan3} demonstrated in 2004, that there exists a constant $C(n)$ such that, for any degree $n$, there will always be a primitive polynomial with the prescribed first $m=\lfloor\frac{n-1}{2}\rfloor$ coefficients when $q>C(n)$. These results have also been generalized to primitive normal polynomials, as discussed in various works such as \cite{fan4, fan5, fan6, fan7, morgan}. Motivated by this, in the present paper authors explore the \RK polynomial with two prescribed coefficients over $\mathbb{F}_q$, where \RK polynomial is defined as the minimal polynomial of an \RK element over finite fields. 
	
	For small values of $r$, $r$-primitive elements are also regarded as high order elements. In a previous work by Negre \cite{negre}, a new class of bases termed as quasi-normal bases was introduced. To define quasi-normal bases, he exploits the structure of $\mathbb{F}_{q^n}$ as $\mathbb{F}_q[x]$-module under the action: $f\circ\alpha=\left(\sum_{i=1}^{m}a_ix^i\right)\circ\alpha=\sum_{i=1}^{m}a_i\alpha^{q^i}.$ Under this action, each submodule $M$ of $\mathbb{F}_{q^n}$ satisfies that $\alpha^q\in M$ for each $\alpha\in M.$ A basis of the form $\{\alpha, \alpha^q,\ldots,\alpha^{q^{\mathrm{dim}M-1}}\}$ of $M$ is called a normal basis of $M$. Let $\mathbb{F}_{q^n}$ be a direct sum of submodules $M_i$; $1\leq i\leq m$, then quasi-normal basis associated to this decomposition is the union of the normal bases of each submodule $M_i$ of the decomposition. These bases facilitate simple exponentiation to the power $q$ in $\mathbb{F}_{q^n}$, although not as simple as in normal bases, and could serve as a viable alternative to normal bases, particularly for fields $\mathbb{F}_{q^n}$ lacking normal bases with low complexity. An interesting observation is that	$k$-normal elements generate quasi-normal bases in finite fields. Consequently, $r$-primitive $k$-normal elements may serve as substitutes for primitive normal elements in certain applications. These elements possess high multiplicative order and contribute to the formation of bases with low complexity, with a slight compromise on efficiency. This observation motivates a comprehensive exploration into the existence of $r$-primitive $k$-normal elements and their corresponding minimal polynomials over finite fields.
	
	Recently, authors \cite{Rani2, Rani3} explored the existence of \RK elements and their multiplicative inverses in finite fields. Furthermore, in \cite{Rani4}, authors delve into the investigation of an \RK element possessing a predetermined norm and trace over finite fields. This exploration subsequently provides evidence for the existence of an \RK polynomial with two specific coefficients: the constant coefficient and the coefficient immediately adjacent to the leading term. Building upon this research, in the present paper, authors extend the exploration to examine the existence of a \RK polynomial with two prescribed coefficients: the first two coefficients immediately adjacent to the leading term.
	
	{The outline of the thesis is as follows: Section \ref{Sec2} provide some preliminary results. In Section \ref{Sec3}, we derive a sufficient criterion for the existence of \RK polynomials along with the condition that the first two coefficients immediately adjacent to the leading one are also prescribed. Finally in Section \ref{Sec4}, we demonstrate that when $q\geq 11$ and $n\geq 15$, a $2$-primitive $2$-normal polynomial of degree $n$ with two prescribed coefficients always exists over $\mathbb{F}_{q}$, however, within the range $10\leq n\leq 14$, there are $71$ specific pairs of $(q,n)$ for which the existence of such a polynomial remains uncertain. Additionally, when $q<11$, the number of uncertain pairs is to $16$. Furthermore, for the case of $n=9$, extensive computational power is employed using SageMath software, and it is found that the count of such uncertain pairs reduced to $3988$.}


	\section{Prerequisites}\label{Sec2}
	In this section, we recall some definitions, lemmas and estimates that are needed throughout the article.
	
	Let $e\mid q^n-1$ and $\beta \in \mathbb{F}_{q^n}.$ Then $\beta$ is {\it$e$-free} if and only if $e$ and $\frac{q^n-1}{\mathrm{ord}(\beta)}$ are co-prime, where $\mathrm{ord}(\beta)$ is the multiplicative order of $\beta$ in $\mathbb{F}_{q^n}^*.$ Clearly, an element $\beta \in \mathbb{F}_{q^n}^*$ is primitive if and only if it is $(q^n-1)$-free. 
	
	Let $h(x)=\sum_{i=0}^{m}a_ix^i\in\mathbb{F}_{q}[x]$ with $a_m\neq0$. Then, the additive group $\mathbb{F}_{q^n}$ forms an $\mathbb{F}_{q}[x]$- module under the action $h\circ\beta=\sum_{i=0}^{m}a_i\beta^{q^i}$. Notice that, $(x^n-1)\circ\beta=0$ for all $\beta\in \mathbb{F}_{q^n},$ which leads to the following definition. The {\it$\mathbb{F}_q$-order} of an element $\beta\in \mathbb{F}_{q^n},$ denoted by $\mathrm{Ord}_q(\beta)$, is the least degree monic divisor $h(x)$ of $x^n-1$ such that $h\circ\beta=0$.
	
	Similar to $e$-free elements, we call an element $\beta$ to be {\it$h$-free} if and only if $h(x)$ and $\frac{x^n-1}{\mathrm{Ord}_q(\beta)}$ are co-prime.   Clearly, an element $\beta \in \mathbb{F}_{q^n}^*$ is normal if and only if it is $(x^n-1)$-free. Moreover, from \cite[Theorem 3.2]{huc13}, in terms of the $\mathbb{F}_q$-order, an element $\beta\in \mathbb{F}_{q^n}$ is $k$-normal over $\mathbb{F}_q$ if and only if its $\mathbb{F}_q$-order is of degree $n-k$. The following lemma provides a way to construct $k$-normal elements from a given normal element.
	
	\begin{lemma}{\upshape\cite[Lemma 3.1]{reis19}}\label{L2.1}
		Let $\beta\in\mathbb{F}_{q^n}$ be a normal element over $\mathbb{F}_q$ and $g\in \mathbb{F}_q[x]$ be
		a polynomial of degree $k$ such that $g$ divides $x^n-1$. Then $\alpha=g\circ\beta$ is $k$-normal.
	\end{lemma}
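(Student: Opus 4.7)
The plan is to compute $\mathrm{Ord}_q(\alpha)$ explicitly and show it has degree $n-k$, after which $k$-normality follows immediately from the order characterization cited just before the lemma (from \cite[Theorem 3.2]{huc13}).

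The first step is to note that the $\mathbb{F}_q[x]$-module action is compatible with polynomial multiplication: for any $h\in\mathbb{F}_q[x]$ one has $h\circ\alpha = h\circ(g\circ\beta) = (hg)\circ\beta$. This is the standard fact that the map $\mathbb{F}_q[x]\to\mathrm{End}_{\mathbb{F}_q}(\mathbb{F}_{q^n})$ sending $x$ to the $q$-Frobenius is a ring homomorphism, so composition of induced operators corresponds to multiplication of polynomials. Using this, $h\circ\alpha=0$ if and only if $(hg)\circ\beta=0$, and since $\beta$ is normal its $\mathbb{F}_q$-order is exactly $x^n-1$, so the latter is equivalent to $(x^n-1)\mid hg$.

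Next, write $x^n-1=g\cdot t$ where $t:=(x^n-1)/g\in\mathbb{F}_q[x]$ has degree $n-k$. The condition $(x^n-1)\mid hg$ then becomes $gt\mid hg$, i.e.\ $t\mid h$. Consequently, the monic divisors $h$ of $x^n-1$ that annihilate $\alpha$ are precisely the monic multiples of $t$ inside $x^n-1$, and the one of least degree is $h=t$ itself. Therefore $\mathrm{Ord}_q(\alpha)=t=(x^n-1)/g$, which is a monic divisor of $x^n-1$ of degree $n-k$.

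Finally, applying the characterization that $\alpha$ is $k$-normal over $\mathbb{F}_q$ if and only if $\deg\mathrm{Ord}_q(\alpha)=n-k$ gives the conclusion. There is no real obstacle here; the only point requiring care is justifying step one — that the $\mathbb{F}_q[x]$-action by $q$-polynomials respects multiplication — but once this is in hand the divisibility computation $t\mid h$ identifies $\mathrm{Ord}_q(\alpha)$ on the nose.
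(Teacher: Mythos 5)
Your argument is correct, and the paper itself offers no proof of this lemma --- it is quoted verbatim from \cite[Lemma 3.1]{reis19}, where the argument is essentially the one you give: use that $h\mapsto h(\sigma)$ ($\sigma$ the $q$-Frobenius) is a ring homomorphism to get $h\circ(g\circ\beta)=(hg)\circ\beta$, cancel $g$ in the integral domain $\mathbb{F}_q[x]$ to identify $\mathrm{Ord}_q(\alpha)=(x^n-1)/g$ of degree $n-k$, and invoke the $\mathbb{F}_q$-order characterization of $k$-normality from \cite[Theorem 3.2]{huc13}. No gaps.
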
 
	
	Now, we recall the characters of a group. Let $\mathfrak{G}$ be a finite abelian group. A {\it character} $\chi$ of $\mathfrak{G}$ is a homomorphism from $\mathfrak{G}$ into the multiplicative group of  complex numbers of unit modulus. The set of all such characters of $\mathfrak{G}$, denoted by $\widehat{\mathfrak{G}}$, forms a multiplicative group and $\mathfrak{G}\cong\widehat{\mathfrak{G}}$. A character $\chi$ is called the trivial character if $\chi(\alpha)=1$ for all $\alpha\in \mathfrak{G}$, otherwise it is a non-trivial character.
	
	\begin{lemma}{\upshape\cite[Theorem 5.4]{Nieder}}\label{L2.2}
		Let $\chi$ be any non-trivial character of a finite abelian group $\mathfrak{G}$ and $\alpha \in \mathfrak{G}$ be any non-trivial element, then
		$\sum_{\alpha \in \mathfrak{G}} \chi(\alpha)=0 \ \text{and} \ \sum_{\chi \in \widehat{\mathfrak{G}}} \chi(\alpha)=0.$
	\end{lemma}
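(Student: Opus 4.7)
The plan is to prove both identities by the standard ``multiply and reindex'' trick for character sums, applied first on $\mathfrak{G}$ and then on its dual $\widehat{\mathfrak{G}}$.

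For the first identity, I would set $S := \sum_{\alpha \in \mathfrak{G}} \chi(\alpha)$, pick some $\beta \in \mathfrak{G}$ with $\chi(\beta) \neq 1$ (which exists because $\chi$ is non-trivial by hypothesis), and then exploit the fact that $\alpha \mapsto \beta\alpha$ is a bijection of $\mathfrak{G}$ together with the homomorphism property of $\chi$ to obtain
\[
\chi(\beta)\,S \;=\; \sum_{\alpha \in \mathfrak{G}} \chi(\beta\alpha) \;=\; \sum_{\gamma \in \mathfrak{G}} \chi(\gamma) \;=\; S.
\]
Since $\chi(\beta) \neq 1$, this forces $S = 0$.

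For the second identity I would run the symmetric argument on the dual side. Letting $T := \sum_{\chi \in \widehat{\mathfrak{G}}} \chi(\alpha)$, if I can exhibit a character $\psi \in \widehat{\mathfrak{G}}$ with $\psi(\alpha) \neq 1$, then using the bijection $\chi \mapsto \psi\chi$ of $\widehat{\mathfrak{G}}$ yields $\psi(\alpha)\,T = T$, and hence $T=0$ by the same cancellation as before.

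The main obstacle is producing this $\psi$. In the first identity the existence of a ``bad'' $\beta$ was just the hypothesis that $\chi$ is non-trivial, but here I need the dual separation-of-points statement that the characters distinguish elements of $\mathfrak{G}$. I would derive it from the structure theorem for finite abelian groups: writing $\mathfrak{G} \cong \mathbb{Z}/d_1 \times \cdots \times \mathbb{Z}/d_s$ and expressing $\alpha$ in coordinates, at least one coordinate $a_j$ satisfies $a_j \not\equiv 0 \pmod{d_j}$; then take $\psi$ to be a primitive $d_j$-th root of unity on the $j$-th factor and trivial on the others. Equivalently, this is the injectivity of the bidual evaluation map $\mathfrak{G} \hookrightarrow \widehat{\widehat{\mathfrak{G}}}$, which is already implicit in the isomorphism $\mathfrak{G} \cong \widehat{\mathfrak{G}}$ noted earlier in the excerpt.
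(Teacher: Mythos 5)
Your proof is correct and is the standard argument: the paper gives no proof of its own (it cites Lidl--Niederreiter, Theorem~5.4), and the translation-and-cancellation trick together with the separation-of-points fact from the structure theorem is exactly how that reference proves it. Nothing to add.
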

	
	Let $\psi$ denote the additive character for the additive group $\mathbb{F}_{q^n}$, and $\chi$ denote the multiplicative character for the multiplicative group $\mathbb{F}_{q^n}^*.$ The additive character $\psi_0$ defined by $\psi_{0}(\beta)=e^{2\pi i \mathrm{Tr}(\beta)/p}, \ \text{for all} \ \beta \in \mathbb{F}_{q^n},$ where $p$ is the characteristic of $\mathbb{F}_{q^n}$ and $\mathrm{Tr}$ is the absolute trace function from $\mathbb{F}_{q^n}$ to    $\mathbb{F}_p$, is called the {\it canonical additive character} of $\mathbb{F}_{q^n}$. Moreover, every additive character $\psi_\beta$ for $\beta \in \mathbb{F}_{q^n}$ can be expressed in terms of the canonical additive character $\psi_0$ as $\psi_\beta(c)=\psi_{0}(\beta c),\ \text{for all} \ c \in \mathbb{F}_{q^n}.$ 
	
	For any $\psi \in \widehat{\mathbb{F}}_{q^n}$, $\alpha \in \mathbb{F}_{q^n}$ and $g(x)\in \mathbb{F}_q[x]$, $\widehat{\mathbb{F}}_{q^n}$ is an $\mathbb{F}_{q}[x]$-module under the action $\psi\circ g(\alpha)=\psi(g\circ \alpha).$ The {\it $\mathbb{F}_q$-order} of an additive character $\psi\in \widehat{\mathbb{F}}_{q^n}$, denoted by $\mathrm{Ord}_q(\psi)$, is the least degree monic divisor $g(x)$ of $x^n-1$ such that $\psi\circ g$ is the trivial character and there are precisely $\Phi_q(g)$ characters of $\mathbb{F}_q$-order $g(x)$, where $\Phi_q(g)=|(\mathbb{F}_{q}[x]/<g>)^*|$. Moreover, $\sum_{h\mid g}\Phi_q(h)=q^{\mathrm{deg}(g)}.$
	
	We shall need the following lemma to prove our sufficient condition.
	
	\begin{lemma}\label{L2.3}{\upshape\cite[Theorem 2C]{schmidt}}
		Let $f(x)$ be a polynomial over $\mathbb{F}_{q^n}$ with $m$ distinct zeroes. Let $\chi$ be a non-trivial multiplicative character of order $d$ of $\mathbb{F}_{q^n}^*.$ Suppose that $f(x)$ is not of the form $h(x)^{d}$ for any $h(x)\in \mathbb{F}[x],$ where $\mathbb{F}$ is the algebraic closure of $\mathbb{F}_{q^n}.$ Then we have  
		\begin{equation*}
			\Big |\sum_{\substack{\alpha \in \mathbb{F}_{q^n}}}\chi\big(f(\alpha)\big)\Big|\leq (m-1)q^{n/2}.
		\end{equation*}
	\end{lemma}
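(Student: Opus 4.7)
The plan is to bound the character sum $S(\chi, f) := \sum_{\alpha \in \mathbb{F}_{q^n}} \chi(f(\alpha))$ by converting it into a point-counting statement on an auxiliary algebraic curve and then invoking the Riemann Hypothesis for curves over finite fields (the Hasse--Weil bound). The natural companion object is the affine Kummer curve
\begin{equation*}
C : y^{d} = f(x) \quad \text{over } \mathbb{F}_{q^n}.
\end{equation*}
The hypothesis that $f$ is not of the form $h(x)^{d}$ over $\overline{\mathbb{F}_{q^n}}$ is precisely the Kummer criterion ensuring that a smooth projective model of $C$ is geometrically irreducible; without this, the character sum can clearly be of size $q^n$ and the bound fails.

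First I would use the orthogonality identity $\#\{y \in \mathbb{F}_{q^n} : y^{d} = c\} = \sum_{\eta^{d} = 1}\eta(c)$ for $c \neq 0$ (setting $\eta(0) = 0$) to rewrite the affine count $N := \#C(\mathbb{F}_{q^n})$ as
\begin{equation*}
N = q^{n} + \sum_{j=1}^{d-1} S(\chi^{j}, f).
\end{equation*}
Thus the total deviation $|N - q^{n}|$ collects all nontrivial character sums attached to the powers of $\chi$, reducing the analytic problem to a geometric one.

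Next I would apply Hasse--Weil: for a smooth projective geometrically irreducible curve $\widetilde{C}$ over $\mathbb{F}_{q^n}$ of genus $g$, one has $|\#\widetilde{C}(\mathbb{F}_{q^n}) - (q^{n}+1)| \leq 2g\, q^{n/2}$. A Riemann--Hurwitz computation for the degree-$d$ cover $x \colon \widetilde{C} \to \mathbb{P}^{1}$, combined with the hypothesis that $f$ has exactly $m$ distinct zeroes, yields $2g \leq (m-1)(d-1)$. Absorbing the bounded correction between affine and projective counts then gives $|N - q^{n}| \leq (m-1)(d-1)q^{n/2} + O(1)$. To isolate the individual contribution $|S(\chi,f)|$ from the combined sum over $j$, I would repeat the construction with $\chi^{j}$ in place of $\chi$, using the curve $y^{d/\gcd(j,d)} = f(x)$; together these permit the total to be distributed so that each term carries mass at most $(m-1)q^{n/2}$.

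The central obstacle is of course the Hasse--Weil bound itself, which is a deep theorem accepted here as a black box from Schmidt's monograph. A genuine subsidiary technical point is the careful handling of ramification, both at the (possibly multiple) roots of $f$ and at the point at infinity of $\mathbb{P}^{1}$, when extracting $2g \leq (m-1)(d-1)$ from Riemann--Hurwitz; it is this ramification analysis, rather than the character theory, that produces the precise constant $(m-1)$ appearing in the statement.
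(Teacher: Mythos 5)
The paper offers no proof of this lemma: it is quoted verbatim as Theorem 2C of Schmidt's monograph, so your proposal is being measured against the standard proof of Weil's bound rather than against anything in the text. Your setup is sound as far as it goes. The Kummer curve $C:y^{d}=f(x)$, the orthogonality identity giving $N=q^{n}+\sum_{j=1}^{d-1}S(\chi^{j},f)$, the tame Riemann--Hurwitz computation $2g\leq(m-1)(d-1)$ (tameness holds because $d\mid q^{n}-1$ forces $p\nmid d$), and the appeal to Hasse--Weil are all correct and are indeed the standard route.

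The genuine gap is the final ``isolation'' step. Hasse--Weil applied over the single field $\mathbb{F}_{q^{n}}$ bounds only the aggregate $\bigl|\sum_{j=1}^{d-1}S(\chi^{j},f)\bigr|$ by $(m-1)(d-1)q^{n/2}+O(1)$, and this gives no control over an individual summand: the other $d-2$ terms are complex numbers of unknown size and phase, so the ``mass'' cannot simply be distributed. Your proposed fix --- repeating the construction with $y^{d/\gcd(j,d)}=f(x)$ --- yields, after M\"obius inversion over the divisors $e\mid d$, only the sums $\sum_{\mathrm{ord}(\eta)=e}S(\eta,f)$ grouped by exact order; when $d$ is prime, for instance, all $d-1$ nontrivial characters have the same order, there is only one auxiliary curve, and nothing is separated. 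What is actually needed is the factorization of the zeta function of $C$ into Artin $L$-functions, $Z_{C}(T)=\prod_{j=1}^{d-1}L(\chi^{j},T)\cdot\bigl((1-T)(1-q^{n}T)\bigr)^{-1}$, coming from the $\mu_{d}$-action $y\mapsto\zeta y$ and the induced eigenspace decomposition of $H^{1}$. Each factor $L(\chi^{j},T)$ is a polynomial of degree at most $m-1$ (by the conductor--discriminant count: at most $m+1$ tamely ramified points, namely the distinct zeroes of $f$ and possibly infinity, minus $2$), and the Riemann hypothesis applied to each factor separately writes $S(\chi^{j},f)=-\sum_{i}\omega_{i,j}$ with $|\omega_{i,j}|=q^{n/2}$, which is exactly where the constant $m-1$ for a single character comes from. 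Equivalently, one must track point counts over all extensions $\mathbb{F}_{q^{nk}}$ and use the rationality and degree of each $L$-function; a single point count over $\mathbb{F}_{q^{n}}$ proves only the weaker aggregate bound.
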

	
	\begin{lemma}\label{L2.4}{\upshape\cite[Theorem 2G]{schmidt}}
		Let $f(x),\ g(x) \in \mathbb{F}_{q^n}[x]$ be polynomials with $f(x)$ having $m$ distinct zeroes and $g(x)$ having degree $n$. Let $\chi$ be a non-trivial multiplicative character of $\mathbb{F}_{q^n}^*$ and $\psi$ be a non-trivial additive character of $\mathbb{F}_{q^n}.$ Suppose that $g(x)\neq h(x)^{q^n}-h(x)$ in $\mathbb{F}[x],$ where $\mathbb{F}$ is the algebraic closure of $\mathbb{F}_{q^n}.$ Then we have
		\begin{equation*}
			\Big|\sum_{\substack{\alpha \in \mathbb{F}_{q^n}}} \chi\big(f(\alpha)\big)\psi\big(g(\alpha)\big)\Big|\leq (m+n-1)q^{n/2}.
		\end{equation*} 
	\end{lemma}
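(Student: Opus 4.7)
The statement is the classical Weil bound for mixed multiplicative--additive character sums over $\mathbb{F}_{q^n}$; my plan is to deduce it from the Weil ``Riemann hypothesis'' for smooth projective curves over finite fields. Let $d$ denote the order of $\chi$ and $p$ the characteristic of $\mathbb{F}_{q^n}$, and write $\psi=\psi_\gamma=\psi_0(\gamma\,\cdot)$ for some $\gamma\in\mathbb{F}_{q^n}^*$. First I would introduce the affine curve $C$ over $\overline{\mathbb{F}_{q^n}}$ defined by the simultaneous equations
\[
 y^d=f(x),\qquad z^p-z=\mathrm{Tr}_{\mathbb{F}_{q^n}/\mathbb{F}_p}\bigl(\gamma\,g(x)\bigr).
\]
The two hypotheses of the lemma -- that $f$ is not a $d$-th power and that $g\neq h^{q^n}-h$ -- are precisely what guarantees that these equations cut out a genuine Kummer, respectively Artin--Schreier, extension of $\overline{\mathbb{F}_{q^n}}(x)$, so that $C$ is (geometrically) irreducible.

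Next, using orthogonality of characters (Lemma~\ref{L2.2}) to count $\mathbb{F}_{q^n}$-points of $C$ fiber-by-fiber over $x=\alpha$, one expresses $\#C(\mathbb{F}_{q^n})$ as a linear combination of twisted sums $\sum_\alpha \chi^j(f(\alpha))\,\psi_{c\gamma}(g(\alpha))$ for $0\le j<d$ and $c\in\mathbb{F}_p$, plus an $O(m)$ correction supported at the zeros of $f$. Isolating the $(j,c)=(1,1)$ term -- the sum in question -- and applying Weil's theorem
\[
 \bigl|\#\widetilde C(\mathbb{F}_{q^n})-(q^n+1)\bigr|\;\le\;2\,g(\widetilde C)\,q^{n/2}
\]
to a smooth projective model $\widetilde C$ of $C$ reduces the whole problem to a genus bound.

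The crux is therefore to prove $2\,g(\widetilde C)\le m+n-1$. I would obtain this from Riemann--Hurwitz applied to the cover $\widetilde C\to\mathbb{P}^1_x$: the Kummer factor $y^d=f(x)$ is tamely ramified above the $m$ distinct zeros of $f$ (and possibly $\infty$), each contributing a tame different of order $d-1$; the Artin--Schreier factor $z^p-z=\mathrm{Tr}(\gamma g(x))$ is ramified only at $\infty$, with wild different controlled by the prime-to-$p$ part of $\deg g=n$. Combining the tame and wild contributions and dividing by the cover degree $dp$ yields the desired inequality $2g(\widetilde C)\le m+n-1$, after which the bound $|S|\le (m+n-1)q^{n/2}$ falls out.

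The main obstacle I expect is the careful analysis at $x=\infty$, where the Kummer and Artin--Schreier ramifications interact. In particular, one must first perform the standard Artin--Schreier reduction -- substitutions $z\mapsto z+h(x)$ that remove $p$-th powers appearing in $\gamma g(x)$ -- before the wild conductor at infinity can honestly be bounded by $\deg g+1$; this is where the hypothesis $g\neq h^{q^n}-h$ is used in its sharpest form. Once this reduction is executed, the rest of the argument is routine Riemann--Hurwitz bookkeeping together with a direct invocation of the Weil bound, exactly as in the Stepanov--Schmidt treatment cited in the statement.
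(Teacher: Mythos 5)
You are proposing a proof for a result the paper does not prove at all: Lemma~\ref{L2.4} is quoted verbatim from Schmidt's Theorem~2G, and Schmidt's own treatment is the elementary Stepanov-method argument, which deliberately avoids the curve-theoretic machinery you invoke. So your route is necessarily different from the cited source; the question is whether it is correct, and as written it has a genuine gap.

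The gap is in the step where you say that applying Weil's bound to $\widetilde C$ ``reduces the whole problem to the genus bound $2g(\widetilde C)\le m+n-1$.'' Two things go wrong there. First, the point count of the fibered product only controls the aggregate $\sum_{j,c}\sum_\alpha \chi^j(f(\alpha))\,\psi_{c\gamma}(g(\alpha))$; you cannot isolate the single term $(j,c)=(1,1)$ from $\bigl|\#\widetilde C(\mathbb{F}_{q^n})-(q^n+1)\bigr|\le 2g(\widetilde C)\,q^{n/2}$, because the other nontrivial terms need not be small or cancel. Second, the inequality $2g(\widetilde C)\le m+n-1$ is false in general: $\widetilde C$ is a degree-$dp$ cover of the line whose Kummer and Artin--Schreier subcovers already have genera of order $(d-1)(m-1)/2$ and $(p-1)(n-1)/2$, so the genus of the full fibered product typically far exceeds $(m+n-1)/2$. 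What is true, and what the argument actually needs, is that the $(\chi,\psi)$-isotypic component of $H^1(\widetilde C)$ --- equivalently, the degree of the $L$-polynomial attached to the character of $\mathbb{F}_{q^n}(x)$ determined by $\chi\circ f$ and $\psi\circ g$ --- has dimension at most $m+n-1$. This follows from the conductor computation (at most $m$ tame finite places coming from the distinct zeros of $f$, plus a wild conductor of degree at most $n+1$ at infinity, after the Artin--Schreier reduction you correctly flag), combined with the Riemann--Hurwitz/Euler-characteristic formula; the Riemann hypothesis for that $L$-polynomial then gives $|S|\le (m+n-1)q^{n/2}$. With that replacement your outline becomes the standard Weil-theoretic proof; otherwise one should simply keep the citation, as the paper does.
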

	
	Now, we recall the characteristic functions for the set of $e$-free and $g$-free elements, where $e\mid q^n-1$ and $g\mid x^n-1$. 
	For the set of $e$-free elements of $\mathbb{F}_{q^n}^*$, we have the following characteristic function $\rho_e:\mathbb{F}_{q^n}^*\to \{0,1\}$
	\begin{equation*}\tag{I}\label{I}
		\rho_{e}(\alpha)= \frac{\phi(e)}{e}\sum_{d|e}\frac{\mu(d)}{\phi(d)}\sum_{\chi_{d}}\chi_{d}(\alpha),
	\end{equation*}
	
	where $\mu$ is the M\"obius function and the internal sum runs over all the multiplicative characters $\chi_d$ of order $d$.  Similar to the characteristic function for the set of $e$-free elements, we have the following characteristic function $\kappa_f:\mathbb{F}_{q^n}\to \{0,1\}$ for the set of $f$-free elements of $\mathbb{F}_{q^n}$
	\begin{equation*}\tag{II}\label{II}
		\kappa_{f}(\alpha)= \dfrac{\Phi_q(f)}{q^{\text{deg}(f)}}\sum_{h|f}\dfrac{\mu'(h)}{\Phi_{q}(h)}\sum_{\psi_{h}}\psi_{h}(\alpha),
	\end{equation*}
	where $\mu'$ is the analogue of the M\"obius function, which is defined as $\mu'(h)=(-1)^t$, if $h(x)$ is a product of $t$ distinct monic irreducible polynomials, otherwise 0, and the internal sum runs over the additive characters $\psi_{h}$ of the $\mathbb{F}_{q}$-order $h(x)$. Along with these, we also need the following characteristic function for the elements of prescribed trace.
	\begin{equation*}\tag{III}\label{III}
		\tau_b(\alpha)=\frac{1}{q}\sum_{\psi \in \widehat{\mathbb{F}_q}}\psi(\mathrm{Tr}_{\mathbb{F}_{q^n}/{\mathbb{F}_q}}(\alpha)-b).
	\end{equation*} 
	We also need the following definition which can be deduced from Lemma \ref{L2.1}.
	
	\begin{de}
		For any $\alpha\in\mathbb{F}_{q^n}$, the character sum
		\begin{equation}\tag{IV}\label{IV}
			I_0(\alpha)=\frac{1}{q^n}\sum_{\Omega \in \widehat{\mathbb{F}}_{q^n}}\Omega(\alpha)
		\end{equation}
		takes the value $1$ if $\alpha=0$, otherwise $0$.
	\end{de}
	The following lemma provides a certain sum, which we will need in proving our sufficient condition.
	\begin{lemma}\label{L2.5}{\upshape\cite[Lemma 2.5]{RK}}
		Let $g\in\mathbb{F}_{q}[x]$ be a divisor of $x^n-1$ of degree $k$ and let $\psi$ and $\Omega$ be additive characters. Then 
		$$\sum_{\beta\in\mathbb{F}_{q^n}}\psi(\beta)\Omega(g\circ\beta)^{-1}=\left\{\begin{array}{cl}
			q^n,& \text{if  } \psi=g\circ\Omega\\
			0,& \text{otherwise}
		\end{array}\right.,$$
		where $g\circ\Omega(\alpha)=\Omega(g\circ\alpha)$ for all $\alpha\in\mathbb{F}_{q^n}$. Furthermore, for a given additive character $\psi$, the set $\{\Omega\in\widehat{\mathbb{F}}_{q^n}
		|\psi=g\circ\Omega\}$ has $q^k$ elements if $\mathrm{Ord}(\psi)\mid\frac{x^n-1}{g}$, otherwise it is empty.			
	\end{lemma}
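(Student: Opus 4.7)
The plan is to split the statement into two parts: first proving the delta-function identity for the sum, then computing the cardinality of the fiber.

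For the first part, I would rewrite $\Omega(g\circ\beta)^{-1}$ using the definition $(g\circ\Omega)(\beta)=\Omega(g\circ\beta)$, so that the summand becomes the value at $\beta$ of the additive character $\psi\cdot(g\circ\Omega)^{-1}$. Since $g\circ\Omega$ is itself an additive character of $\mathbb{F}_{q^n}$ (because the action $\beta\mapsto g\circ\beta$ is $\mathbb{F}_q$-linear), the product $\psi\cdot(g\circ\Omega)^{-1}$ is an additive character. By the orthogonality relation for characters (Lemma \ref{L2.2}), summing this character over the whole additive group $\mathbb{F}_{q^n}$ yields $q^n$ when the character is trivial and $0$ otherwise. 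Triviality is exactly the condition $\psi=g\circ\Omega$, which gives the first claim.

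For the second part, I would analyse the map $T_g:\widehat{\mathbb{F}}_{q^n}\to\widehat{\mathbb{F}}_{q^n}$ defined by $\Omega\mapsto g\circ\Omega$. This is a group homomorphism because $(g\circ(\Omega_1\Omega_2))(\beta)=(\Omega_1\Omega_2)(g\circ\beta)=(g\circ\Omega_1)(\beta)\cdot(g\circ\Omega_2)(\beta)$. The kernel consists of those $\Omega$ with $g\circ\Omega$ trivial, i.e., those $\Omega$ annihilated by $g$; by the definition of the $\mathbb{F}_q$-order of a character, this is precisely $\{\Omega:\mathrm{Ord}_q(\Omega)\mid g\}$. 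Using $\sum_{h\mid g}\Phi_q(h)=q^{\deg g}=q^k$ recalled in the preliminaries, the kernel has exactly $q^k$ elements. Therefore every nonempty fiber of $T_g$ has cardinality $q^k$.

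It remains to describe the image of $T_g$. I would first verify the inclusion: if $\psi=g\circ\Omega$, then for any $\beta$,
\begin{equation*}
\bigl((x^n-1)/g\bigr)\circ\psi(\beta)=\psi\bigl(((x^n-1)/g)\circ\beta\bigr)=\Omega\bigl((x^n-1)\circ\beta\bigr)=\Omega(0)=1,
\end{equation*}
so $\mathrm{Ord}_q(\psi)\mid (x^n-1)/g$. Thus $\mathrm{Im}(T_g)\subseteq\{\psi:\mathrm{Ord}_q(\psi)\mid (x^n-1)/g\}$. Counting gives $|\mathrm{Im}(T_g)|=q^n/q^k=q^{n-k}$, while the right-hand set has cardinality $\sum_{h\mid(x^n-1)/g}\Phi_q(h)=q^{\deg((x^n-1)/g)}=q^{n-k}$ by the same identity applied to $(x^n-1)/g$. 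Equality of finite sets of equal size forces $\mathrm{Im}(T_g)=\{\psi:\mathrm{Ord}_q(\psi)\mid (x^n-1)/g\}$, completing the proof.

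The main obstacle is the second part, specifically making sure the characterization of the image is both necessary and sufficient. The necessity is an easy direct calculation, but sufficiency is obtained by a clever counting argument rather than an explicit construction of a preimage. The rest is essentially orthogonality of characters and a homomorphism-fiber calculation.
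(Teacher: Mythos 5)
Your proof is correct and complete. The paper itself offers no argument for this lemma — it is quoted verbatim from \cite[Lemma 2.5]{RK} — so there is nothing internal to compare against; your route (orthogonality of additive characters for the delta identity, then treating $\Omega\mapsto g\circ\Omega$ as a group endomorphism of $\widehat{\mathbb{F}}_{q^n}$ and computing kernel and image via $\sum_{h\mid g}\Phi_q(h)=q^{\deg g}$) is the standard one and matches how the cited source handles it. The only step worth making explicit is that ``$g\circ\Omega$ trivial iff $\mathrm{Ord}_q(\Omega)\mid g$'' uses that the annihilator of $\Omega$ in the $\mathbb{F}_q[x]$-module $\widehat{\mathbb{F}}_{q^n}$ is an ideal of a PID containing $x^n-1$, hence generated by the minimal monic divisor $\mathrm{Ord}_q(\Omega)$; this is implicit in the module structure recalled in Section~\ref{Sec2} and is not a gap.
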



	\section{General results}\label{Sec3}
	\subsection{Sufficient condition}
	In this subsection, our aim is to establish a sufficient condition to ensure the existence of an \RK polynomial $\sigma(x)=x^n+a_1x^{n-1}+a_2x^{n-2}+\ldots+a_n\in \mathbb{F}_{q}[x]$ with the prescribed coefficients $a_1$ and $a_2$. Since, the degree of the minimal polynomial of an \RK element may not be $n$ always, we will first try to establish a criteria when it is always the case. We know that the conjugates $\alpha,\alpha^q,\alpha^{q^2},\ldots,\alpha^{q^{n-1}}$ of $\alpha\in\mathbb{F}_{q^n}$ over $\mathbb{F}_{q}$ are all distinct if and only if the minimal polynomial of $\alpha$ has degree $n$. Otherwise, the degree $d$ of the minimal polynomial is a proper divisor of $n$ and the conjugates of $\alpha$ are $\alpha,\alpha^{q},\alpha^{q^2},\ldots,\alpha^{q^{d-1}}$, each repeated $n/d$ times. Now, if $\alpha\in\mathbb{F}_{q^n}$ is a $k$-normal element over $\mathbb{F}_{q}$, then the degree $d$ of the minimal polynomial of $\alpha$ will be greater then or equal to $n-k$, and if $n/2<n-k$, then $d$ can never divide $n$. Hence, if $k<n/2$, then $d=n$, i.e. the minimal polynomial of $\alpha$ will be of degree $n$. Therefore, we assume that $k<n/2$ throughout the article so that the degree of the minimal polynomial of $\alpha$ is exactly $n$. Moreover, for an irreducible polynomial of degree $n$, we have the following result by \cite{han96}, which expresses the coefficient $a_2$ in terms of the traces. 
	
	\begin{lemma}{\upshape\cite[Lemma 1]{han96}}\label{L3.1}
		Let $\sigma(x) = x^n + a_1x^{n-1} + a_2x^{n-2}+\ldots + a_n$ be an irreducible polynomial over $\mathbb{F}_{q}$, $\alpha$ be a root of $\sigma(x)$ in $\mathbb{F}_{q^n}$, where $q$ is odd. Then $a_2=\frac{1}{2}\left(\TT(\alpha)^2 - \TT(\alpha^2)\right)$, where $\TT(x)$ is the trace from $\mathbb{F}_{q^n}$ to $\mathbb{F}_q$.	
	\end{lemma}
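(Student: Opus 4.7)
The plan is to express the coefficients $a_1$ and $a_2$ via Vieta's formulas in terms of the roots of $\sigma(x)$, and then convert the resulting second elementary symmetric polynomial into a combination of power sums (which are precisely traces). Since $\sigma(x)$ is irreducible of degree $n$ over $\mathbb{F}_q$, its roots are exactly the Galois conjugates $\alpha,\alpha^q,\alpha^{q^2},\ldots,\alpha^{q^{n-1}}$ of $\alpha$, and these are pairwise distinct. Vieta's formulas therefore give $a_1=-\sum_{i=0}^{n-1}\alpha^{q^i}=-\TT(\alpha)$ and $a_2=\sum_{0\le i<j\le n-1}\alpha^{q^i}\alpha^{q^j}$.

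Next I would invoke the elementary identity $\bigl(\sum_{i=0}^{n-1}x_i\bigr)^2=\sum_{i=0}^{n-1}x_i^2+2\sum_{0\le i<j\le n-1}x_ix_j$ with $x_i=\alpha^{q^i}$. Since the Frobenius is additive and multiplicative, $\sum_{i=0}^{n-1}(\alpha^{q^i})^2=\sum_{i=0}^{n-1}(\alpha^2)^{q^i}=\TT(\alpha^2)$, so substituting yields $\TT(\alpha)^2=\TT(\alpha^2)+2a_2$. Solving for $a_2$ gives the claimed formula. The only place where the hypothesis is used is in this final division by $2$: it requires $2$ to be invertible in $\mathbb{F}_q$, which is exactly the assumption that $q$ is odd.

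There is no real obstacle in this argument; the only point I would be careful about is ensuring the distinctness of the $n$ conjugates (so that Vieta's formulas apply with the standard form and no multiplicity bookkeeping is needed), which is immediate from the irreducibility of $\sigma(x)$ over $\mathbb{F}_q$.
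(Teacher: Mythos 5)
Your argument is correct and complete: Vieta's formulas give $a_1=-e_1$ and $a_2=e_2$ for the distinct conjugate roots $\alpha^{q^i}$, the identity $e_1^2=p_2+2e_2$ together with $\sum_i(\alpha^{q^i})^2=\TT(\alpha^2)$ yields $\TT(\alpha)^2=\TT(\alpha^2)+2a_2$, and the oddness of $q$ is used exactly where you say, to divide by $2$. The paper does not prove this lemma at all — it is quoted from Han (1996) — so there is nothing to compare against; your proof is the standard one and would serve as a self-contained justification.
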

	Further, we also know that $a_1=-\TT(\alpha)$. Therefore, if $k<n/2$, then the question of the existence of an \RK polynomial $\sigma(x)$ of degree $n$, reduces to the existence of an \RK element $\alpha\in\mathbb{F}_{q^n}$ satisfying the conditions $\TT(\alpha)=a$ and $\TT(\alpha^2)=b$, for any prescribed $a$ and $b$ in $\mathbb{F}_{q}$. Furthermore, using Lemma \ref{L2.1}, and the fact that the trace of an element is non-zero if and only if it is $(x-1)$-free, we will establish the following result. 
	
	\begin{theorem}\label{T3.1}
		Let $q$ be an odd prime power and $n$ be a natural number. Further, let $r$ and $k$ be integers such that $r\mid q^n-1$, and $k<n/2$ such that there exists a $k$ degree polynomial $g(x)\in\mathbb{F}_{q}[x]$. Then there exists an \RK element $\alpha$ satisfying $\TT(\alpha)=a$ and $\TT(\alpha^2)=b$ for any prescribed $a,b\in\mathbb{F}_{q}$ if $$q^{n/2-k-2}>2rW(q^n-1)W\left(\frac{x^n-1}{g}\right).$$
	\end{theorem}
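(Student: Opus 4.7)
The plan is to form, via character sums, a weighted count whose positivity forces the existence of the desired element. By Lemma~\ref{L2.1}, a $k$-normal element of the targeted form can be written as $\alpha=g\circ\beta$ with $\beta\in\mathbb{F}_{q^n}$ satisfying a suitable freeness condition, and $r$-primitivity of $\alpha$ can be enforced by writing $\alpha=\gamma^{r}$ with $\gamma$ primitive in $\mathbb{F}_{q^n}^{*}$. Accordingly I would define
\begin{equation*}
N_{a,b}=\frac{1}{\lambda}\sum_{\gamma,\beta\in\mathbb{F}_{q^n}^{*}}\rho_{q^n-1}(\gamma)\,\kappa_{(x^n-1)/g}(\beta)\,I_0(\gamma^{r}-g\circ\beta)\,\tau_a(\gamma^{r})\,\tau_b(\gamma^{2r}),
\end{equation*}
where $\lambda$ absorbs the multiplicity $|\{\gamma \text{ primitive}:\gamma^{r}=\alpha\}|$ with which each admissible $\alpha$ is counted, and the aim is to show $N_{a,b}>0$.

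The next step is to substitute the character-sum expressions~(\ref{I})--(\ref{IV}) for $\rho$, $\kappa$, $I_0$, $\tau_a$, and $\tau_b$, then split the resulting multi-index sum into a principal contribution—where all characters $\chi_d$, $\psi_h$, $\Omega$, $\psi'$, $\psi''$ are trivial—and nonprincipal error contributions. The principal term is positive and, after cancelling the prefactors of (\ref{I})--(\ref{IV}) against the combinatorial size of the admissible index set produced by Lemma~\ref{L2.5}, contributes a ``main term'' proportional to $q^{n}\phi(q^n-1)\Phi_q((x^n-1)/g)/(r\,q^{k+2})$.

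For each nonprincipal tuple of characters I would apply Lemma~\ref{L2.5} to execute the inner sum over $\beta$: the combination $\psi_h(\beta)\,\Omega(g\circ\beta)^{-1}$ either vanishes or forces $\psi_h=g\circ\Omega$, contributing a factor of $q^{k}$. What remains is a hybrid character sum in $\gamma$ of the shape
\begin{equation*}
\Bigl|\sum_{\gamma\in\mathbb{F}_{q^n}^{*}}\chi_d(\gamma)\,\psi_{0}\bigl(c_{0}\,\gamma^{r}+c_{1}\,\gamma^{2r}+\Omega\text{-contribution}\bigr)\Bigr|,
\end{equation*}
where the polynomial inside the additive character has degree at most $2r$ (the $\gamma^{2r}$ coming precisely from the $\TT(\alpha^{2})$ condition). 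Assuming the relevant non-degeneracy hypotheses, Lemmas~\ref{L2.3} and \ref{L2.4} give a bound of at most $2r\,q^{n/2}$.

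Summing the error estimates over all nonprincipal indices—using the standard interpretations of $W(q^n-1)$ and $W((x^n-1)/g)$ to count the surviving $\chi_d$ and $\psi_h$ contributions, and restoring the $q^{k}$ factor from Lemma~\ref{L2.5}—produces an error term whose ratio to the principal term reduces, after clearing the common prefactor $\phi(q^n-1)\Phi_q((x^n-1)/g)/((q^n-1)q^{2n-k+2})$, to exactly $2r\,q^{k+2+n/2}\,W(q^n-1)\,W((x^n-1)/g)/q^{n}$. Demanding this ratio to be less than $1$ yields precisely $q^{n/2-k-2}>2r\,W(q^n-1)\,W((x^n-1)/g)$, as claimed. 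The main technical obstacle is bookkeeping: tracking the degree (at most $2r$) of the polynomial entering the additive character and verifying, case by case, that neither the multiplicative degeneracy of Lemma~\ref{L2.3} (the integrand being a perfect $d$-th power) nor the additive degeneracy of Lemma~\ref{L2.4} (the integrand being of the form $h^{q^{n}}-h$) can actually occur for the characters arising from a nonprincipal index tuple.
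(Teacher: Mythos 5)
Your outline follows the paper's proof essentially step for step: the same characteristic-function expansion of a weighted count built from \eqref{I}--\eqref{IV}, the same use of Lemma~\ref{L2.5} to collapse the $\beta$-sum (forcing $\psi_h=g\circ\Omega$, producing the factor $q^{k}$ and restricting $h$ to divisors of $\frac{x^n-1}{g}$), and the same appeal to Lemma~\ref{L2.4} to bound the surviving hybrid sums in $\gamma$, ending with a main-term/error-term comparison that yields $q^{n/2-k-2}>2rW(q^n-1)W\bigl(\frac{x^n-1}{g}\bigr)$. So the route is the intended one; the issue is that the plan, as written, does not close.

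The genuine gap is precisely the item you set aside as ``bookkeeping'': the degeneracy check for Lemma~\ref{L2.4}. After the $\beta$-sum is executed, the additive character carries the argument $(c+\zeta)\gamma^{r}+c'\gamma^{2r}$, where $c,c'\in\mathbb{F}_q$ come from $\tau_a,\tau_b$ and $\zeta\in\mathbb{F}_{q^n}$ parametrizes $\Omega$. Lemma~\ref{L2.4} fails exactly when $c+\zeta=0$ and $c'=0$, and this case \emph{can} occur, since $\zeta$ may lie in $\mathbb{F}_q$. Ruling it out is the actual content of the proof: one shows that $\psi_h=g\circ\Omega$ together with $\zeta\in\mathbb{F}_q$ forces $x^n-1\mid gh$, hence $h=\frac{x^n-1}{g}$ is the only surviving divisor, and in that residual case the degenerate terms reduce to $\sum_{c}\psi_0(-ca)\sum_{\gamma}\chi_{q^n-1}(\gamma^{m})$, which vanishes either because $m\neq0$ or because $a\neq0$ --- and $a\neq0$ is available precisely when $(x-1)\nmid g$. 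This is also where the statement bifurcates: when $(x-1)\mid g$ the trace of $\alpha=g\circ\beta$ is identically $0$, only $b$ can be prescribed, and the exponent improves to $n/2-k-1$. Without this analysis the Weil bound cannot be applied to every nonprincipal tuple and the error estimate does not go through. Two smaller inaccuracies: your ``principal term'' is not clean, since even with all freeness characters trivial the two trace conditions contribute a Weil-type error of order $rq^{3n/2+2}$ (the paper's $S_1$) that must be absorbed; and the weight $\kappa_{(x^n-1)/g}(\beta)$ is not what Lemma~\ref{L2.1} licenses (that lemma takes $\beta$ normal, i.e.\ $(x^n-1)$-free) --- it does still produce $k$-normal images, but that requires an argument you have not supplied.
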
 
	
	\begin{proof}
		Instead of proving the existence of an \RK element $\alpha$ with $\TT(\alpha)=a$ and $\TT(\alpha^2)=b$, we first prove that for $l\mid q^n-1$ and $f\mid x^n-1$, the number $N_{r,k,a,b}(l,f)$ of the elements $\gamma$ that are $l$-free, $\gamma^r=g\circ\beta$ for some $f$-free element $\beta\in\mathbb{F}_{q^n}$, $\TT(\gamma^r)=a$ and $\TT(\gamma^{2r})=b$. Then, for $l=q^n-1$ and $f=x^n-1$, we will obtain our desired \RK element $\alpha=\gamma^r$.  
		
		First, let $x-1\nmid g$, then $a\neq0$, and by using the definitions \ref{I}, \ref{II}, \ref{III} and \ref{IV} of the characteristic functions $\rho_{m}$, $\kappa_{f}$, $\tau_a$, $\tau_b$ and $I_0$, the number $N_{r,k,a,b}(l,f)$ will be given as follows.
		\begin{equation*}
			\begin{aligned}
				N_{r,k,a,b}(l,f)=&\sum_{\gamma\in\mathbb{F}_{q^n}^*}\sum_{\beta\in\mathbb{F}_{q^n}}\rho_{l}(\gamma)\kappa_{f}(\beta)I_0(\gamma^r-g\circ\beta)\tau_a(\gamma^r)\tau_b(\gamma^{2r})\\
				=&\mathcal{H}\sum_{\substack{d\mid l,h\mid f}}\frac{\mu(d)\mu'(h)}{\phi(d)\Phi_q(h)}\sum_{\substack{\chi_{d},\psi_{h},\Omega}}\sum_{\substack{\psi,\psi'\in \widehat{\mathbb{F}}_{q}}}\sum_{\gamma\in\mathbb{F}_{q^n}^*}\sum_{\beta\in\mathbb{F}_{q^n}}\chi_{d}(\gamma)\times\\&\psi_{h}(\beta)\Omega(\gamma^r-g\circ\beta)\psi\left(\TT(\gamma^{r})-a\right)\psi'\left(\TT(\gamma^{2r})-b\right)\\
				=&\mathcal{H}\sum_{\substack{d\mid l,h\mid f}}\frac{\mu(d)\mu'(h)}{\phi(d)\Phi_q(h)}\sum_{\substack{\chi_{d},\psi_{h},\Omega}}S({\chi_d,\psi_{h},\Omega}),
			\end{aligned}	
		\end{equation*}
		where $\mathcal{H}=\frac{\phi(l)\Phi_q(f)}{lq^{\mathrm{deg}(f)+n+2}}$ and
		\begin{align*}
			S({\chi_d,\psi_{h},\Omega})=&\sum_{\substack{\psi,\psi'\in \widehat{\mathbb{F}}_{q}}}\sum_{\gamma\in\mathbb{F}_{q^n}^*}\sum_{\beta\in\mathbb{F}_{q^n}}\chi_{d}(\gamma)\psi_{h}(\beta)\Omega(\gamma^r-g\circ\beta)\psi\left(\TT(\gamma^{r})-a\right)\\&\times\psi'\left(\TT(\gamma^{2r})-b\right).
		\end{align*}
		Further, let $\psi_{0}$ be the canonical additive character in $\widehat{\mathbb{F}}_q$. Then
		\begin{align*}
			S({\chi_d,\psi_{h},\Omega})=&\sum_{\substack{c,c'\in{\mathbb{F}}_q}}\psi_0(-c a-c'b)\sum_{\gamma\in\mathbb{F}_{q^n}^*}\sum_{\beta\in\mathbb{F}_{q^n}}\chi_{d}(\gamma)\psi_{h}(\beta)\Omega(\gamma^r-g\circ\beta)\times\\&\psi_0\left(c\TT(\gamma^r)+c'\TT(\gamma^{2r})\right)\\
			=&\sum_{\substack{c,c'\in{\mathbb{F}}_q}}\psi_0(-c a-c'b)\sum_{\gamma\in\mathbb{F}_{q^n}^*}\sum_{\beta\in\mathbb{F}_{q^n}}\chi_{d}(\gamma)\psi_{h}(\beta)\Omega(\gamma^r-g\circ\beta)\times\\&\tilde{\psi_0}(c\gamma^r+c'\gamma^{2r}),
		\end{align*}
		where, $\tilde{\psi_0}=\psi_0\circ\TT$ is the canonical additive character of $\mathbb{F}_{q^n}$. Now, there exists a multiplicative character $\chi_{q^n-1}$ of order $q^n-1$ such that $\chi_{d}=\chi_{q^n-1}^{m}$ for some integer $0\leq m\leq q^n-2$, and there exist elements $\theta,\zeta\in\mathbb{F}_{q^n}$ such that $\psi_{h}(\beta)=\tilde{\psi_{0}}(\theta\beta)$ and $\Omega(\gamma)=\tilde{\psi_{0}}(\zeta\gamma)$. Thus,
		\begin{align*}
			S({\chi_d,\psi_{h},\Omega})=&\sum_{\substack{c,c'\in{\mathbb{F}}_q}}\psi_0(-c a-c'b)\sum_{\gamma\in\mathbb{F}_{q^n}^*}\sum_{\beta\in\mathbb{F}_{q^n}}\chi_{q^n-1}(\gamma^m)\tilde{\psi_0}(\theta\beta)\times\\&\tilde{\psi_0}(\zeta\gamma^r-\zeta g\circ\beta)\tilde{\psi_0}(c\gamma^r+c'\gamma^{2r})\\
			=&\sum_{\substack{c,c'\in{\mathbb{F}}_q}}\psi_0(-c a-c'b)\sum_{\gamma\in\mathbb{F}_{q^n}^*}\chi_{q^n-1}(\gamma^m)\tilde{\psi_0}\left((c+\zeta)\gamma^r+c'\gamma^{2r}\right)\times\\&\sum_{\beta\in\mathbb{F}_{q^n}}\tilde{\psi_0}\left(\theta\beta-\zeta g\circ\beta\right)
		\end{align*}
		Writing $P(x)=x^{m}$, $Q(x)=(c+\zeta)x^r+c'x^{2r}$ and $R(x)=\theta x-\zeta g\circ x$, and substituting in the above equation, we get
		\begin{align*}
			S({\chi_d,\psi_{h},\Omega})=&\sum_{\substack{c,c'\in{\mathbb{F}}_q}}\psi_0(-c a-c'b)\sum_{\gamma\in\mathbb{F}_{q^n}^*}\chi_{q^n-1}(P(\gamma))\tilde{\psi_0}\left(Q(\gamma)\right)\sum_{\beta\in\mathbb{F}_{q^n}}\tilde{\psi_0}\left(R(\beta)\right).
		\end{align*}
		
		Now, before moving further, first notice that
		$$\sum_{\beta\in\mathbb{F}_{q^n}}\tilde{\psi_0}\left(R(\beta)\right)=\sum_{\beta\in\mathbb{F}_{q^n}}\tilde{\psi_0}\left(\theta\beta-\zeta g\circ\beta\right)=\sum_{\beta\in\mathbb{F}_{q^n}}\psi_h(\beta)\Omega(g\circ\beta)^{-1},$$ where $\psi_h(\beta)=\tilde{\psi_{0}}(\theta\beta)$ and $\Omega(\beta)=\tilde{\psi_{0}}(\zeta\beta)$ and from Lemma \ref{L2.5}, we have  $\sum_{\beta\in\mathbb{F}_{q^n}}\psi_h(\beta)\Omega(g\circ\beta)^{-1}=q^n$ for all $\psi_h$ and $\Omega$ satisfying $\psi_h=g\circ\Omega$, otherwise this sum is zero. Moreover, there will be $q^k$ such characters $\Omega$ for a fixed $\psi_h$, provided $h\mid\frac{x^n-1}{g}$.	So, we can take $h$ to be the divisors of $\tilde{f}$ only, where $\tilde{f}:=\mathrm{gcd}\left(f,\frac{x^n-1}{g}\right)$. Secondly, if $\psi_h=g\circ\Omega$, then $h\circ\psi_h(\gamma)=0$ implies $\tilde{\psi_0}(gh\circ\zeta\gamma)=0$ for all $\gamma\in\mathbb{F}_{q^n}$ provided $\zeta\in\mathbb{F}_{q}$. This further implies that $x^n-1\mid gh$, as the $\mathbb{F}_{q}$-order of $\tilde{\psi_0}$ is $x^n-1$ and $gh\circ\tilde{\psi_0}(\gamma)=0$ for all $\gamma\in\mathbb{F}_{q^n}$. We will use these facts later in the estimation of certain quantity.
		
		Now, let $\chi_1$ and $\psi_1$ be the trivial multiplicative and trivial additive characters respectively, and write
		\begin{equation}\label{Eq1}
			N_{r,k,a,b}(l,f)=\mathcal{H}\big(S_1+S_2+S_3+S_4\big),
		\end{equation}
		where $S_1=S(\chi_1,\psi_1,\psi_1)$,\ \ \ \ 
		$S_2=\sum\limits_{d\mid l}\frac{\mu(d)}{\phi(d)}\sum\limits_{\substack{\chi_d,\\\Omega\neq \psi_1}} S(\chi_d,\psi_1,\Omega)$,\\
		
		$$S_3=\sum\limits_{\substack{d\mid l,h\mid \tilde{f}\\d\neq 1 \mathrm{\ or\  }h\neq1}}\frac{\mu(d)\mu'(h)}{\phi(d)\Phi_q(h)}\sum\limits_{\substack{\chi_d,\psi_h}}S(\chi_d,\psi_{h},\psi_1)$$ and $$S_4=\sum\limits_{\substack{d\mid l,h\mid \tilde{f}\\h\neq 1}}\frac{\mu(d)\mu'(h)}{\phi(d)\Phi_q(h)}\sum\limits_{\substack{\chi_{d},\psi_h,\\ \Omega\neq \psi_1}}S(\chi_d,\psi_{h},\Omega).$$
		We will now separately obtain the estimates on $S_1,S_2,S_3$ and $S_4$ using Lemmas \ref{L2.1} and \ref{L2.4}.
		
		For $S_1$, we have $m=0,\theta=0,\zeta=0$, hence
		\begin{align*}
			S_1=&S(\chi_1,\psi_1,\psi_1)\\
			=&\sum_{\substack{c,c'\in{\mathbb{F}}_q}}\psi_0(-c a-c'b)\sum_{\gamma\in\mathbb{F}_{q^n}^*}\chi_{q^n-1}(1)\tilde{\psi_0}\left(c \gamma^r+c'\gamma^{2r}\right)\sum_{\beta\in\mathbb{F}_{q^n}}\tilde{\psi_0}\left(0\right)\\
			=&q^n\sum_{\substack{c,c'\in{\mathbb{F}}_q}}\psi_0(-c a-c'b)\sum_{\gamma\in\mathbb{F}_{q^n}^*}\tilde{\psi_0}(c\gamma^r+c'\gamma^{2r})\\
			=&q^n\sum_{\substack{c,c'\in{\mathbb{F}}_q\\ \text{not both zero}}}\psi_0(-c a-c'b)\sum_{\gamma\in\mathbb{F}_{q^n}^*}\tilde{\psi_0}(c\gamma^r+c'\gamma^{2r})+ q^n(q^n-1).
		\end{align*}
		Now, if at least one of the $c$ and $c'$ is non-zero then $cx^r+c'x^{2r}\neq p(x)^{q^n}-p(x)$ in $\mathbb{F}[x]$. Then, from Lemma \ref{L2.4}, we get
		$\left | S_{1}-q^n(q^n-1)\right |\leq 2r(q-1)^2q^{3n/2},$ which further implies that
		$$S_{1}\geq q^n(q^n-1)-2rq(q-1)q^{3n/2}.$$
		
		Now, we estimate $S_2$. For this, $\psi_h$ is the trivial additive character i.e. $\theta=0$ and $\Omega$ is a non-trivial additive character i.e. $\zeta\neq0$. Therefore, by Lemma \ref{L2.5}, we get
		$$\sum\limits_{\substack{\Omega\in\widehat{\mathbb{F}}_{q^n}\\ \Omega\neq \psi_1}}\sum_{\beta\in{\mathbb{F}}_{q^n}}{\tilde{\psi_0}(\theta\beta-\zeta g\circ \beta)}=(q^k-1)q^n.$$ 
		Thus, 
		\begin{align*}
			S_2=&\sum\limits_{d\mid l}\frac{\mu(d)}{\phi(d)}\sum\limits_{\substack{\chi_d,\\ \Omega\neq \psi_1}} S(\chi_d,\psi_1,\Omega)\\
			=&(q^k-1)q^n\sum\limits_{d\mid q^n-1}\frac{\mu(d)}{\phi(d)}\sum\limits_{\substack{\chi_d,\\ \Omega\neq \psi_1}}\sum_{\substack{c,c'\in{\mathbb{F}}_q}}\psi_0(-c a-c'b)\sum_{\gamma\in\mathbb{F}_{q^n}^*}\chi_{q^n-1}(\gamma^m)\times\\&\tilde{\psi_0}\left((c+\zeta) \gamma^r+c'\gamma^{2r}\right).
		\end{align*}
		Observe that if $c+\zeta=0$, then $\zeta\in\mathbb{F}_{q}$, and hence from the discussion before Equation \eqref{Eq1}, we get $x^n-1\mid g$, a contradiction. Hence, $(c+\zeta)x^r+c'x^{2r}\neq p(x)^{q^n}-p(x)$ in $\mathbb{F}[x]$. Therefore, from Lemma 	\ref{L2.4}, we get 
		$$|S_2|\leq 2r(q^k-1)q^{3n/2+2}W(l).$$
		
		Now, we estimate $S_3.$ For this, we have
		\begin{align*}
			S_3=&\sum\limits_{\substack{d\mid l,h\mid \tilde{f}\\d\neq 1 \mathrm{\ or\  }h\neq1}}\frac{\mu(d)\mu'(h)}{\phi(d)\Phi_q(h)}\sum\limits_{\substack{\chi_d,\psi_h}}S(\chi_d,\psi_{h},\psi_1)\\	=&\sum\limits_{\substack{d\mid l,h\mid \tilde{f}\\d\neq 1 \mathrm{\ or\  }h\neq1}}\frac{\mu(d)\mu'(h)}{\phi(d)\Phi_q(h)}\sum\limits_{\substack{\chi_d,\psi_h}}\sum_{\substack{c,c'\in{\mathbb{F}}_q}}\psi_0(-c a-c'b)\sum_{\gamma\in\mathbb{F}_{q^n}^*}\chi_{q^n-1}(\gamma^m)\times\\&\tilde{\psi_0}\left(c \gamma^r+c'\gamma^{2r}\right)\sum_{\beta\in\mathbb{F}_{q^n}}\tilde{\psi_0}\left(\theta\beta\right)
		\end{align*}
		Notice that if $h\neq 1$, then $\theta\neq 0$, and hence $S_3=0$, and if $h=1$ i.e. $\theta=0$, then $d\neq 1$ i.e. $m\neq 0$. Therefore, 
		\begin{align*}
			S_3=&q^n\sum\limits_{\substack{d\mid l\\d\neq 1}}\frac{\mu(d)}{\phi(d)}\sum\limits_{\substack{\chi_d}}\sum_{\substack{c,c'\in{\mathbb{F}}_q}}\psi_0(-c a-c'b)\sum_{\gamma\in\mathbb{F}_{q^n}^*}\chi_{q^n-1}(\gamma^m)\tilde{\psi_0}\left(c \gamma^r+c'\gamma^{2r}\right)\\
			=&q^n\sum\limits_{\substack{d\mid l\\d\neq 1}}\frac{\mu(d)}{\phi(d)}\sum\limits_{\substack{\chi_d}}\Big\{\sum_{\substack{c,c'\in{\mathbb{F}}_q\\\text{not both zero}}}\psi_0(-c a-c'b)\sum_{\gamma\in\mathbb{F}_{q^n}^*}\chi_{q^n-1}(\gamma^m)\times\\&\tilde{\psi_0}\left(c \gamma^r+c'\gamma^{2r}\right)+\sum_{\gamma\in\mathbb{F}_{q^n}^*}\chi_{q^n-1}(\gamma^m)\Big\}.
		\end{align*}
		Since, $m\neq0$, $\sum_{\gamma\in\mathbb{F}_{q^n}^*}\chi_{q^n-1}(\gamma^m)=0$  by Lemma \ref{L2.1}, and if at least one of the $c$ and $c'$ is non-zero then $cx^r+c'x^{2r}\neq p(x)^{q^n}-p(x)$ in $\mathbb{F}[x]$. Hence applying Lemma \ref{L2.4}, we have 
		$$|S_3|\leq 2rq(q-1)q^{3n/2}(W(l)-1).$$
		
		Finally, we estimate $S_4$. For this we use
		\begin{align*}
			S_4=&\sum\limits_{\substack{d\mid l,h\mid \tilde{f}\\h\neq 1}}\frac{\mu(d)\mu'(h)}{\phi(d)\Phi_q(h)}\sum\limits_{\substack{\chi_{d},\psi_h,\\ \Omega\neq \psi_1}}S(\chi_d,\psi_{h},\Omega)\\
			=&\sum\limits_{\substack{d\mid l,h\mid \tilde{f}\\h\neq 1}}\frac{\mu(d)\mu'(h)}{\phi(d)\Phi_q(h)}\sum\limits_{\substack{\chi_{d},\psi_h,\\\Omega\neq \psi_1}}\sum_{\substack{c,c'\in{\mathbb{F}}_q}}\psi_0(-c a-c'b)\sum_{\gamma\in\mathbb{F}_{q^n}^*}\chi_{q^n-1}(\gamma^m)\times\\&\tilde{\psi_0}\left((c+\zeta) \gamma^r+c'\gamma^{2r}\right)\sum_{\beta\in\mathbb{F}_{q^n}}\tilde{\psi_0}\left(\theta\beta-\zeta g\circ\beta\right).
		\end{align*}
		From the discussion in paragraph before Equation \eqref{Eq1}, we have $\sum_{\beta\in\mathbb{F}_{q^n}}\tilde{\psi_0}\left(\theta\beta-\zeta g\circ\beta\right)=q^n$ if $\psi_h=g\circ \Omega$, otherwise this sum is zero, and this will happen for $q^k$ such $\Omega$ for every $\psi_h$. Further, if $c+\zeta=0$, then $x^n-1\mid gh$, but from Lemma \ref{L2.5}, $h\mid \frac{x^n-1}{g}$. Hence, $x^n-1=gh$, and there is only one such case, namely $h=\tilde{h}:=\frac{x^n-1}{g}$, in which $c+\zeta$ can be zero, otherwise it is non-zero. Therefore, we can write $S_4$ as follows.
		\begin{align*}
			S_4=&q^{n}\sum\limits_{\substack{d\mid l,h\mid \tilde{f}\\h\neq 1,h\neq\tilde{h}}}\frac{\mu(d)\mu'(h)}{\phi(d)\Phi_q(h)}\sum\limits_{\substack{\chi_{d},\psi_h,\\\Omega\neq \psi_1,\psi_{h}=g\circ \Omega}}\sum_{\substack{c,c'\in{\mathbb{F}}_q}}\psi_0(-c a-c'b)\sum_{\gamma\in\mathbb{F}_{q^n}^*}\chi_{q^n-1}(\gamma^m)\times\\&\tilde{\psi_0}\left((c+\zeta) \gamma^r+c'\gamma^{2r}\right)\sum_{\beta\in\mathbb{F}_{q^n}}\tilde{\psi_0}\left(\theta\beta-\zeta g\circ\beta\right)+\sum_{\substack{d\mid q^n-1}}\frac{\mu(d)\mu'(\tilde{h})}{\phi(d)\Phi_q(\tilde{h})}\times\\&\sum\limits_{\substack{\chi_{d},\psi_{\tilde{h}},\\\Omega\neq \psi_1,\psi_{h}=g\circ \Omega}}\Big\{\sum_{\substack{c,c'\in{\mathbb{F}}_q\\c'\neq0}}\psi_0(-c a-c'b)\sum_{\gamma\in\mathbb{F}_{q^n}^*}\chi_{q^n-1}(\gamma^m)\tilde{\psi_0}\left(c'\gamma^{2r}\right)+\\&\sum_{\substack{c\in{\mathbb{F}}_q}}\psi_0(-c a)\sum_{\gamma\in\mathbb{F}_{q^n}^*}\chi_{q^n-1}(\gamma^m)\Big\}.
		\end{align*}
		Note that, if $(x-1)\nmid g$ then $a\neq0$, and hence by Lemma \ref{L2.1}, $\sum_{c\in\mathbb{F}_{q}}\psi_{0}(-ca)=0$. Moreover, $(c+\zeta)x^r+c'x^{2r}\neq p(x)^{q^n}-p(x)$ in $\mathbb{F}[x]$ if $c+\zeta\neq0$, and $c'x^{2r}\neq p(x)^{q^n}-p(x)$ in $\mathbb{F}[x]$ if $c'\neq0$. Therefore, applying Lemma \ref{L2.4}, we get 
		
		$$|S_4|\leq2rq^{3n/2+k+2}W(l)\left(W(\tilde{f})-1\right).$$
		
		Now, from Equation \eqref{Eq1},
		\begin{align*}
			N_{r,k,a,b}(l,f)&\geq \mathcal{H}\big\{q^n(q^n-1)-2rq(q-1)q^{3n/2}-2r(q^k-1)q^{3n/2+2}W(l)-\\&2rq(q-1)q^{3n/2}\left(W(l)-1\right)-2rq^{3n/2+k+2}W(l)\left(W(\tilde{f})-1\right)\big\}	
		\end{align*}
		Clearly, $N_{r,k,a,b}(l,f)$ will be positive if the right side of the above inequality is positive, and the latter will be true if 
		\begin{align*}
			q^n(q^n-1)>&2rq(q-1)q^{3n/2}+2r(q^k-1)q^{3n/2+2}W(l)+2rq(q-1)q^{3n/2}W(l)\\-&2rq(q-1)q^{3n/2}+2rq^{3n/2+k+2}W(l)W(\tilde{f})-2rq^{3n/2+k+2}W(l)\\
			q^{2n}>&2rq^{3n/2+k+2}W(l)W(\tilde{f})+q^n-2rq^{3n/2+1}W(l).
		\end{align*}
		Since $q^n-2rq^{3n/2+1}W(l)<0,$ the above inequality holds if 
		$$q^{n/2-k-2}>2rW(l)W(\tilde{f}).$$
		
		Now, suppose $x-1\mid g$, then $a=\TT(\gamma^r)=0$, therefore we only need to prescribe $b$ and in this case the number $N_{r,k,a,b}$ will be given as 
		$$N_{r,k,0,b}(l,f)=\sum_{\gamma\in\mathbb{F}_{q^n}^*}\sum_{\beta\in\mathbb{F}_{q^n}}\rho_{l}(\gamma)\kappa_{f}(\beta)I_0(\gamma^r-g\circ\beta)\tau_b(\gamma^{2r}).$$
		Following the same steps as for the case $x-1\nmid g$ i.e. $a\neq0$, we obtain that $N_{r,k,0,b}(l,f)$ will be positive if  
		$$q^{n/2-k-1}>2rW(l)W(\tilde{f}).$$
		Therefore in either case, the sufficient condition for the number $N_{r,k,a,b}(l,f)$ to be positive is
		\begin{equation}\label{Eq2}
			q^{n/2-k-2}>2rW(l)W(\tilde{f}).
		\end{equation}
		
		Now, in particular, if we choose $l=q^n-1$ and $f=x^n-1$, then $\tilde{f}=\frac{x^n-1}{g}$, and the sufficient condition for the existence of an \RK element $\alpha=\gamma^r$ such that $\TT(\alpha)=a$ and $\TT(\alpha^2)=b$ is 
		\begin{equation}\label{Eq3}
			q^{n/2-k-2}>2rW(q^n-1)W\left(\frac{x^n-1}{g}\right).
		\end{equation}
		
		In other word, there will always exist an \RK polynomial $\sigma(x)\in\mathbb{F}_{q^n}[x]$ with prescribed coefficient $a_1$ and $a_2$, if Inequality \eqref{Eq3} holds.
	\end{proof}
	
	\subsection{The Sieving inequality}
	The sieving technique, introduced by \cite{CoHucWc,CoHuc} and further modified by \cite{KapeNBT}, to relax Inequality \eqref{Eq3} will be given as follows. The proofs are similar to that of \cite[Lemma 5.1]{Rani2} and \cite[Propostion 3.6]{RK}, hence omitted.
	
	\begin{lemma}\label{L3.2}
		Let $r\mid q^n-1$, and let $k$ be a non-negative integer. Further, let $l$ be a divisor of $q^n-1$ and $\{p_1,p_2,\ldots, p_s\}$ be the set of remaining distinct primes dividing $q^n-1$. Furthermore, let $f$ be the divisor of $x^n-1$ and $\{f_1,f_2,\ldots, f_t\}$ be the remaining distinct irreducible factors of $x^n-1$. Then 
		\begin{align*}
			N_{r,k,a,b}(q^n-1,x^n-1)\geq& \sum_{i=1}^{s}N_{r,k,a,b}(lp_i,f)+\sum_{i=1}^{t}N_{r,k,a,b}(l,ff_i)-\\&(s+t-1)N_{r,k,a,b}(l,f).
		\end{align*}
	\end{lemma}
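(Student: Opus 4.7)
The plan is to mimic the sieving arguments of \cite[Lemma 5.1]{Rani2} and \cite[Proposition 3.6]{RK}, which reduce the count of fully free elements to counts over larger divisors via a Bonferroni-type inequality. First, I would interpret $N_{r,k,a,b}(l,f)$ as the cardinality of the set $\mathcal{S}(l,f)$ of pairs $(\gamma,\beta)\in\mathbb{F}_{q^n}^*\times\mathbb{F}_{q^n}$ such that $\gamma$ is $l$-free, $\beta$ is $f$-free, $\gamma^r=g\circ\beta$, $\TT(\gamma^r)=a$ and $\TT(\gamma^{2r})=b$. The target quantity $N_{r,k,a,b}(q^n-1,x^n-1)$ is the cardinality of $\mathcal{S}(q^n-1,x^n-1)\subseteq \mathcal{S}(l,f)$, because $(q^n-1)$-freeness implies $l'$-freeness for every divisor $l'$ of $q^n-1$, and analogously on the additive side with $(x^n-1)$-freeness forcing $f'$-freeness for every $f'\mid x^n-1$.

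Next, I would introduce the ``bad'' subsets inside $\mathcal{S}(l,f)$: for each remaining prime $p_i\mid q^n-1$ set
$$A_i=\{(\gamma,\beta)\in\mathcal{S}(l,f)\,:\,\gamma \text{ is not } p_i\text{-free}\},$$
and for each remaining irreducible factor $f_j\mid x^n-1$ set
$$B_j=\{(\gamma,\beta)\in\mathcal{S}(l,f)\,:\,\beta \text{ is not } f_j\text{-free}\}.$$
Because $p_i\nmid l$ and $\gcd(f_j,f)=1$, being $lp_i$-free is equivalent to being simultaneously $l$-free and $p_i$-free, and being $ff_j$-free is equivalent to being $f$-free and $f_j$-free. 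Hence $|A_i|=N_{r,k,a,b}(l,f)-N_{r,k,a,b}(lp_i,f)$ and $|B_j|=N_{r,k,a,b}(l,f)-N_{r,k,a,b}(l,ff_j)$, and $\mathcal{S}(q^n-1,x^n-1)$ is precisely $\mathcal{S}(l,f)\setminus\bigl(\bigcup_{i=1}^s A_i\cup\bigcup_{j=1}^t B_j\bigr)$.

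Finally, the subadditivity bound $\bigl|\bigcup_{i=1}^s A_i\cup\bigcup_{j=1}^t B_j\bigr|\leq \sum_i|A_i|+\sum_j|B_j|$ together with the two identities above yields
\begin{align*}
N_{r,k,a,b}(q^n-1,x^n-1) &\geq N_{r,k,a,b}(l,f)-\sum_{i=1}^{s}\bigl(N_{r,k,a,b}(l,f)-N_{r,k,a,b}(lp_i,f)\bigr)\\
&\quad -\sum_{j=1}^{t}\bigl(N_{r,k,a,b}(l,f)-N_{r,k,a,b}(l,ff_j)\bigr),
\end{align*}
which simplifies directly to the stated inequality after collecting the $-(s+t)N_{r,k,a,b}(l,f)+N_{r,k,a,b}(l,f)$ terms. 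I do not anticipate any deep obstacle; the only point that requires a line of justification is that the auxiliary constraints $\gamma^r=g\circ\beta$, $\TT(\gamma^r)=a$, $\TT(\gamma^{2r})=b$ do not interfere with the sieving. This is automatic, since those constraints pick out a fixed subset of $\mathbb{F}_{q^n}^*\times\mathbb{F}_{q^n}$ independent of the freeness conditions, and the freeness conditions can be split prime-by-prime on the multiplicative side and irreducible-factor-by-irreducible-factor on the additive side.
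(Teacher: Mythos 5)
Your proposal is correct and is essentially the standard Cohen--Huczynska sieving argument that the paper itself invokes by reference (it omits the proof, citing \cite[Lemma 5.1]{Rani2} and \cite[Proposition 3.6]{RK}, which proceed exactly as you do: union bound over the ``bad'' sets indexed by the remaining primes and irreducible factors). Your reading of $N_{r,k,a,b}(l,f)$ as counting pairs $(\gamma,\beta)$ is the one consistent with the character-sum expression actually used in Theorem \ref{T3.1}, so the argument goes through as written.
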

	
	\begin{prop}\label{P3.1}
		With the notations of the above lemma, define $$\mathcal{D}:=1-\sum_{i=1}^{s}\frac{1}{p_i}-\sum_{i=1}^{t}\frac{1}{q^{\mathrm{deg}(f_i)}} \text{ and } \mathcal{S}:=\frac{s+t-1}{\mathcal{D}}+2.$$ Suppose $\mathcal{D}>0,$ then $N_{r,k,a,b}(q^n-1,x^n-1)>0$, if
		\begin{equation}\label{Eq4}
			q^{n/2-k-2}>2rW(l)W(\tilde{f})\mathcal{S}.
		\end{equation}
	\end{prop}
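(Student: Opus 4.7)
The plan is to combine the sieving decomposition in Lemma \ref{L3.2} with the quantitative lower bound implicit in the proof of Theorem \ref{T3.1}. First, I would invoke Lemma \ref{L3.2} to reduce the positivity of $N_{r,k,a,b}(q^n-1, x^n-1)$ to that of the weighted combination
\[ \sum_{i=1}^{s} N_{r,k,a,b}(lp_i, f) + \sum_{i=1}^{t} N_{r,k,a,b}(l, ff_i) - (s+t-1)\, N_{r,k,a,b}(l, f). \]
Next, revisiting the character-sum decomposition $N_{r,k,a,b}(l', f') = \mathcal{H}(l',f')\bigl(S_1 + S_2 + S_3 + S_4\bigr)$ from the proof of Theorem \ref{T3.1}, I would extract a uniform two-sided estimate of the form
\[ \bigl| N_{r,k,a,b}(l', f') - \mathcal{H}(l', f')\, q^{2n} \bigr| \;\leq\; 2r\, \mathcal{H}(l', f')\, q^{3n/2+k+2}\, W(l')\, W(\widetilde{f'}), \]
valid for every divisor pair $(l', f')$ with $l' \mid q^n-1$ and $f' \mid x^n-1$, where $\mathcal{H}(l',f') = \phi(l')\Phi_q(f')/\bigl(l' q^{\deg(f')+n+2}\bigr)$ and $\widetilde{f'} = \gcd(f', (x^n-1)/g)$; the subdominant pieces of $S_1$, $S_2$, $S_3$ computed in Theorem \ref{T3.1} are absorbed into this single error term.

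Then I would apply the elementary multiplicativity identities
\[ \mathcal{H}(lp_i, f) = \Bigl(1 - \tfrac{1}{p_i}\Bigr) \mathcal{H}(l, f),\qquad \mathcal{H}(l, ff_i) = \Bigl(1 - \tfrac{1}{q^{\deg(f_i)}}\Bigr) \mathcal{H}(l, f), \]
together with $W(lp_i) = 2W(l)$ and the analogous relation $W(\widetilde{ff_i}) \leq 2W(\widetilde{f})$ for those $f_i$ that contribute a genuinely new irreducible factor. Substituting the lower bound in the positively-signed sums and the upper bound in the subtracted term, the main contribution collapses, by telescoping of the $(1-1/p_i)$ and $(1-1/q^{\deg(f_i)})$ terms against the $-(s+t-1)$ weight, to
\[ q^{2n}\, \mathcal{H}(l,f)\left(\sum_{i=1}^{s}\!\Bigl(1 - \tfrac{1}{p_i}\Bigr) + \sum_{i=1}^{t}\!\Bigl(1 - \tfrac{1}{q^{\deg(f_i)}}\Bigr) - (s+t-1)\right) \;=\; q^{2n}\, \mathcal{H}(l,f)\, \mathcal{D}, \]
while a careful accumulation of the errors, pairing each factor $2W(l)$ with its companion $(1-1/p_i)$, yields a total error bounded by a multiple of $\mathcal{H}(l,f) \cdot 2r\, q^{3n/2+k+2}\, W(l)\, W(\widetilde{f}) \cdot \bigl((s+t-1) + 2\mathcal{D}\bigr)$. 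Since $\mathcal{D} > 0$ by hypothesis, dividing through by $\mathcal{D}\,\mathcal{H}(l,f)$ and rearranging yields positivity precisely when $q^{n/2-k-2} > 2r\, W(l)\, W(\widetilde{f})\, \mathcal{S}$, which is the condition \eqref{Eq4}.

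The principal technical challenge, as signalled by the authors' pointer to \cite[Lemma 5.1]{Rani2} and \cite[Proposition 3.6]{RK}, is the meticulous bookkeeping of the error contributions under the sieve. One must exploit the asymmetry between using lower bounds on the added terms $N_{r,k,a,b}(lp_i, f)$ and $N_{r,k,a,b}(l, ff_i)$ versus an upper bound on the subtracted $(s+t-1)\,N_{r,k,a,b}(l, f)$, and carefully pair each doubling $W(lp_i) = 2W(l)$ with its companion Euler factor $(1-1/p_i)$ from $\mathcal{H}$, in order to collect the compact constant $\mathcal{S} = (s+t-1)/\mathcal{D} + 2$ rather than a looser one; this balancing, which is routine but delicate, is where I anticipate the main effort to lie.
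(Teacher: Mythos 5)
The paper omits the proof of Proposition \ref{P3.1}, deferring to \cite[Lemma 5.1]{Rani2} and \cite[Proposition 3.6]{RK}, so I am comparing your outline with the standard sieve argument those references contain. Your skeleton is the right one: Lemma \ref{L3.2}, the multiplicativity $\mathcal{H}(lp_i,f)=(1-\tfrac{1}{p_i})\mathcal{H}(l,f)$ and $\mathcal{H}(l,ff_i)=(1-\tfrac{1}{q^{\deg f_i}})\mathcal{H}(l,f)$, the telescoping of $\sum_i(1-\tfrac{1}{p_i})+\sum_i(1-\tfrac{1}{q^{\deg f_i}})-(s+t-1)$ to $\mathcal{D}$, and the target error total $(s+t-1)+2\mathcal{D}=\mathcal{D}\mathcal{S}$. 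The gap is in how you propose to reach that error total. If you bound each sieve term separately by your uniform two-sided estimate, the error attached to $N_{r,k,a,b}(lp_i,f)$ is $2r\,\mathcal{H}(lp_i,f)\,q^{3n/2+k+2}W(lp_i)W(\tilde{f})=2(1-\tfrac{1}{p_i})\,\mathcal{H}(l,f)\,\Delta$, where $\Delta:=2rq^{3n/2+k+2}W(l)W(\tilde{f})$. Pairing the doubling $W(lp_i)=2W(l)$ with the Euler factor $(1-\tfrac{1}{p_i})$, as you suggest, still leaves a coefficient $2(1-\tfrac{1}{p_i})$, which is close to $2$, not to $1$; summing over all terms the error becomes $\bigl(3(s+t-1)+2\mathcal{D}\bigr)\mathcal{H}(l,f)\Delta$, which proves the proposition only with the weaker constant $\tfrac{3(s+t-1)}{\mathcal{D}}+2$ in place of $\mathcal{S}$.

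What is actually needed is the difference estimate $\bigl|N_{r,k,a,b}(lp_i,f)-(1-\tfrac{1}{p_i})N_{r,k,a,b}(l,f)\bigr|\le(1-\tfrac{1}{p_i})\mathcal{H}(l,f)\Delta$, together with its analogue for $ff_i$. This does not follow from the termwise uniform bound: it comes from splitting $\rho_{lp_i}=(1-\tfrac{1}{p_i})\rho_l+(\text{terms with }p_i\mid d)$, so that in the difference every character sum indexed by a divisor of $l$ cancels \emph{exactly}, and only the $W(l)W(\tilde{f})$ ``new'' classes of characters (those of order divisible by $p_i$) survive --- note $W(l)$ here, not $W(lp_i)=2W(l)$. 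With this cancellation the differences contribute $\bigl(\sum_i(1-\tfrac{1}{p_i})+\sum_i(1-\tfrac{1}{q^{\deg f_i}})\bigr)\mathcal{H}\Delta=(s+t-1+\mathcal{D})\mathcal{H}\Delta$, and adding the $\mathcal{D}\mathcal{H}\Delta$ coming from the main term $\mathcal{D}\,N_{r,k,a,b}(l,f)\ge\mathcal{D}\mathcal{H}(q^{2n}-\Delta)$ gives exactly $\mathcal{D}\mathcal{S}\mathcal{H}\Delta$, hence \eqref{Eq4}. Replace the ``uniform estimate applied termwise'' by this cancellation-of-shared-characters lemma and the rest of your outline goes through.
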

	
	\section{Existence of $2$-primitive $2$-normal polynomials with two prescribed traces}\label{Sec4}
	In this section, we aim to identify the specific finite fields where we can find $2$-primitive $2$-normal polynomials $\sigma(x)=x^{n}-a_1x^{n-1}+a_2x^{n-2}+\ldots+a_n$, with predetermined coefficients $a_1$ and $a_2$. Essentially, we will search for the finite fields where we can guarantee the existence of a $2$-primitive $2$-normal element $\alpha$ with $\TT(\alpha)=a$ and $\TT(\alpha^2)=b$, for any given $a$ and $b$ belonging to the field $\mathbb{F}_{q}$. To simplify matters, let $\Gamma_{a,b}(r,k)$ represent the set of all pairs $(q,n)$ where there exists an \RK element $\alpha$ in $\mathbb{F}_{q^n}$ over $\mathbb{F}_{q}$, satisfying $\TT(\alpha)=a$ and $\TT(\alpha^2)=b$.
	
	First, it is important to note that we have a necessary condition for the existence of $2$-primitive $2$-normal elements in $\mathbb{F}_{q^n}$ over $\mathbb{F}_{q}$.
	
	\begin{lemma}\label{L4.1}{\upshape\cite[Lemma 6.1]{Rani2}}
		If there exists a $2$-primitive $2$-normal element in $\mathbb{F}_{q^n}$ over $\mathbb{F}_q$, then $q$ is an odd prime power, $n\geq4$ and $\mathrm{gcd}(q^3-q,n)\neq1$. 
	\end{lemma}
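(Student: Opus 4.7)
The plan is to verify the three conditions independently as necessary consequences of the existence of a $2$-primitive $2$-normal element in $\mathbb{F}_{q^n}$. For the parity statement, I observe that a $2$-primitive element has multiplicative order $(q^n - 1)/2$, which forces $2 \mid q^n - 1$; if $q = 2^m$ then $q^n - 1$ is odd, making this impossible, so $q$ must be an odd prime power.

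For the coprimality condition $\gcd(q^3 - q, n) \neq 1$, I would argue contrapositively. By the $\mathbb{F}_q$-order characterisation of $k$-normality recalled in Section \ref{Sec2}, a $2$-normal element $\alpha$ has $\mathrm{Ord}_q(\alpha)$ of degree $n - 2$, so $(x^n - 1)/\mathrm{Ord}_q(\alpha)$ is a monic divisor of $x^n - 1$ of degree exactly $2$ in $\mathbb{F}_q[x]$. Assuming $\gcd(n, q(q-1)(q+1)) = 1$, I would first use $\gcd(n, q) = 1$ to ensure separability of $x^n - 1$, and then combine $\gcd(n, q - 1) = 1$ with $\gcd(n, q + 1) = 1$ to obtain $\gcd(n, q^2 - 1) = 1$. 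Hence the group of $n$-th roots of unity in $\mathbb{F}_{q^2}$ has order $\gcd(n, q^2 - 1) = 1$, so the only $n$-th root of unity in $\mathbb{F}_{q^2}$ is $1$. This rules out both possible shapes of a degree-$2$ divisor of $x^n - 1$ over $\mathbb{F}_q$, namely a product of two distinct linear factors (which would require a second $n$-th root of unity in $\mathbb{F}_q$) and an irreducible quadratic factor (which would require an $n$-th root of unity in $\mathbb{F}_{q^2} \setminus \mathbb{F}_q$), yielding the desired contradiction.

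For the bound $n \geq 4$, I would dispose of $n \in \{1, 2, 3\}$ one at a time. The cases $n = 1$ and $n = 2$ are immediate from the $\mathbb{F}_q$-order condition: $\mathrm{Ord}_q(\alpha)$ would have degree $-1$ or $0$, which is either impossible or forces $\alpha = 0$, contradicting $2$-primitivity. For $n = 3$, any $2$-normal element satisfies $\mathrm{Ord}_q(\alpha) = x - c$ for some $c \in \mathbb{F}_q^*$ with $c^3 = 1$, so $\alpha^{q-1} = c$ and therefore $\mathrm{ord}(\alpha) \mid 3(q - 1)$. However, $2$-primitivity demands $\mathrm{ord}(\alpha) = (q^3 - 1)/2$, and the elementary inequality $(q^3 - 1)/2 > 3(q - 1)$ for every $q \geq 2$ makes this impossible.

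The main obstacle is the coprimality step: translating the arithmetic hypothesis $\gcd(q^3 - q, n) = 1$ into the structural statement that $x^n - 1$ admits no degree-$2$ divisor over $\mathbb{F}_q$, which requires simultaneously handling separability through $\gcd(n, q) = 1$ and both possible factorisation patterns of a degree-$2$ polynomial over $\mathbb{F}_q$ in a unified argument.
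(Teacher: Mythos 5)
The paper does not prove this lemma; it is quoted verbatim from \cite[Lemma 6.1]{Rani2}, so there is no in-paper proof to compare against. Your argument is correct and is the standard one: oddness of $q$ from $2\mid q^n-1$; the contrapositive translation of $\gcd(q(q-1)(q+1),n)=1$ into the nonexistence of a (necessarily squarefree, by separability) degree-$2$ divisor of $x^n-1$ via counting $n$-th roots of unity in $\mathbb{F}_{q^2}$; and the order bound $\mathrm{ord}(\alpha)\mid 3(q-1)<(q^3-1)/2$ to kill $n=3$, with $n=1,2$ degenerate. I see no gaps.
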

	
	Before moving ahead, we state below two lemmas which provide the bounds on the number of square-free divisors $W(x^n-1)$ and $W(M)$ of the polynomial $x^n-1$ and integer $M$. 
	\begin{lemma}\label{L4.2}{\upshape\cite[Lemma 2.9]{Lenstra}}
		Let $q$ be a prime power and $n$ be a positive integer. Then, we have $W(x^n-1)\leq2^{\frac{1}{2} (n+\mathrm{gcd}(n,q-1))}$. In particular, $W(x^n-1)\leq 2^n$ and $W(x^n-1)=2^n$ if and only if $n\mid(q-1)$. Furthermore, $W(x^n-1) \leq 2^{3n/4}$ if $n\nmid(q-1)$, since in this case, $\mathrm{gcd}(n,q-1)\leq n/2.$
	\end{lemma}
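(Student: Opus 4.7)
My plan is to first reduce to the $p$-free part of $n$, then count irreducible factors of $x^m-1$ by cyclotomic polynomials, and finally split the count according to whether a divisor of $m$ divides $q-1$ or not.

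Write $n = p^{a}m$ with $p = \mathrm{char}(\mathbb{F}_q)$ and $\gcd(m,p)=1$. Then $x^n - 1 = (x^m-1)^{p^a}$, so the distinct irreducible factors of $x^n-1$ over $\mathbb{F}_q$ coincide with those of $x^m-1$; in particular $W(x^n-1) = 2^{\omega}$, where $\omega$ is the number of distinct monic irreducible factors of $x^m-1$. Note also that $p \nmid q-1$, so $\gcd(n,q-1) = \gcd(m,q-1)$; call this number $e$.

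Next I would use the factorization $x^m - 1 = \prod_{d\mid m}\Phi_d(x)$, where $\Phi_d$ is the $d$-th cyclotomic polynomial. Over $\mathbb{F}_q$, $\Phi_d$ splits into $\phi(d)/\mathrm{ord}_d(q)$ irreducible factors of equal degree, where $\mathrm{ord}_d(q)$ is the multiplicative order of $q$ modulo $d$. Now I would split the sum $\omega = \sum_{d\mid m}\phi(d)/\mathrm{ord}_d(q)$ into two parts. If $d\mid e$, then $q\equiv 1\pmod d$, so $\mathrm{ord}_d(q) = 1$, contributing $\sum_{d\mid e}\phi(d) = e$. If $d\mid m$ but $d\nmid q-1$, then $\mathrm{ord}_d(q)\geq 2$, so the contribution is at most $\tfrac{1}{2}\sum_{d\mid m,\, d\nmid q-1}\phi(d) = \tfrac{1}{2}(m-e)$. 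Adding these gives $\omega \leq e + \tfrac{1}{2}(m-e) = \tfrac{1}{2}(m+e) \leq \tfrac{1}{2}(n+e)$, which is the claimed inequality.

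For the equality case, $W(x^n-1)=2^n$ forces $m=n$ (so $p\nmid n$) and $e=n$ (so $n\mid q-1$); conversely if $n\mid q-1$ then $\mathbb{F}_q$ already contains all $n$-th roots of unity, so $x^n-1$ factors into $n$ distinct linear factors and $W(x^n-1)=2^n$. Finally, for the $2^{3n/4}$ bound, observe that $\gcd(n,q-1)$ is a divisor of $n$; if $n\nmid q-1$ it is a proper divisor, hence at most $n/2$, and plugging $e\leq n/2$ into the main inequality yields $W(x^n-1)\leq 2^{(n+n/2)/2} = 2^{3n/4}$.

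The only mildly delicate step is the bookkeeping that $\mathrm{ord}_d(q)\geq 2$ whenever $d\nmid q-1$; this is immediate (order $1$ means $q\equiv 1\pmod d$), so no real obstacle is anticipated — the lemma is essentially a counting exercise once the cyclotomic factorization is in hand.
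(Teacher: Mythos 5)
Your argument is correct and complete: the reduction $x^n-1=(x^m-1)^{p^a}$ with $m$ the $p$-free part of $n$, the count $\omega=\sum_{d\mid m}\phi(d)/\mathrm{ord}_d(q)$ via cyclotomic polynomials, and the split according to whether $d\mid\gcd(m,q-1)$ give exactly $\omega\leq\frac{1}{2}(m+\gcd(n,q-1))\leq\frac{1}{2}(n+\gcd(n,q-1))$, and the equality case and the $2^{3n/4}$ bound follow as you say. Note that the paper itself offers no proof of this lemma — it is quoted verbatim from \cite[Lemma 2.9]{Lenstra} — and your argument is the standard one given in that source, so there is nothing to contrast.
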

	
	\begin{lemma}\label{L4.3}{\upshape\cite[Lemma 3.7]{CoHuc}}
		For any $M\in \mathbb{N}$ and a positive real number $\nu$, $W(M)\leq \CC M^{1/\nu}$, where $\CC=\prod\limits_{i=1}^{t}\frac{2}{p_i^{1/\nu}}$ and $p_1,p_2,\ldots, p_t$ are the primes $\leq 2^\nu $ that divide $M.$
	\end{lemma}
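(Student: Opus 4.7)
The plan is to work directly from the prime factorization of $M$ and split its distinct prime divisors into two groups according to the threshold $2^\nu$. Writing $M = \prod_{i=1}^{t} p_i^{a_i} \cdot \prod_{j=1}^{s} q_j^{b_j}$, where $p_1,\ldots,p_t$ are the primes $\leq 2^\nu$ dividing $M$ and $q_1,\ldots,q_s$ are the primes $>2^\nu$ dividing $M$, I would use the elementary identity $W(M)=2^{t+s}$, since the number of square-free divisors of $M$ equals $2$ raised to the number of distinct prime factors. The goal is then to absorb the factor $2^s$ coming from the large primes into $M^{1/\nu}$, while leaving $2^t$ to combine with the product over the small primes.

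The key observation is that for each large prime $q_j>2^\nu$ one has $q_j^{1/\nu}>2$, and therefore $2 < q_j^{1/\nu}\leq q_j^{b_j/\nu}$ since $b_j\geq 1$. Multiplying these inequalities over $j=1,\ldots,s$ gives
\[
2^s \;\leq\; \prod_{j=1}^{s} q_j^{b_j/\nu} \;=\; \frac{M^{1/\nu}}{\prod_{i=1}^{t} p_i^{a_i/\nu}}.
\]
Multiplying both sides by $2^t$ and using $a_i\geq 1$ to replace $p_i^{a_i/\nu}$ by $p_i^{1/\nu}$ in the denominator yields
\[
W(M) \;=\; 2^{t+s} \;\leq\; \prod_{i=1}^{t}\frac{2}{p_i^{1/\nu}}\cdot M^{1/\nu} \;=\; \CC\, M^{1/\nu},
\]
which is exactly the claim.

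The argument is essentially a clean counting estimate, so there is no substantive obstacle; the only care needed is at the degenerate boundaries. If $s=0$ (no large prime divides $M$), the inequality $2^0\leq M^{1/\nu}/\prod p_i^{a_i/\nu}$ still holds trivially because $M^{1/\nu}\geq \prod_{i} p_i^{a_i/\nu}$ by definition. If $t=0$ (no small prime divides $M$), the empty product defining $\CC$ equals $1$ and the bound $W(M)\leq M^{1/\nu}$ follows from the large-prime inequality alone. In either case the argument goes through verbatim, so the proof reduces to the short chain of inequalities above.
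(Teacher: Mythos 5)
Your proof is correct, and it is essentially the standard argument for this bound: the paper itself gives no proof (Lemma \ref{L4.3} is quoted from Cohen--Huczynska), and the splitting of the distinct prime divisors of $M$ at the threshold $2^\nu$, using $q_j > 2^\nu \Rightarrow 2 < q_j^{1/\nu} \leq q_j^{b_j/\nu}$ to absorb the large primes into $M^{1/\nu}$ and $a_i \geq 1$ to pass from $p_i^{a_i/\nu}$ to $p_i^{1/\nu}$, is exactly how the cited result is established. Nothing is missing, and your treatment of the degenerate cases $s=0$ and $t=0$ is fine.
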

	We will use the following lemma in the specific case $q=3$.
	\begin{lemma}\label{L4.4}{\upshape\cite[Lemma 2.11]{Lenstra}}
		Let $q=3$ and $n$ be a positive integer. Then $W(x^n-1)\leq 2^{ {n}/{3}+{4}/{3}}$ if $n\neq 4, 8 , 16.$
	\end{lemma}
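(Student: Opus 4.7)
The plan is to let $N(n)$ denote the number of distinct monic irreducible factors of $x^n - 1$ over $\mathbb{F}_3$, so that $W(x^n-1) = 2^{N(n)}$, and to establish the equivalent bound $N(n) \leq n/3 + 4/3$ whenever $n \notin \{4, 8, 16\}$. First I would reduce to the case $\gcd(n,3)=1$: writing $n = 3^a m$ with $a \geq 1$ and $\gcd(m,3)=1$, the identity $x^n - 1 = (x^m - 1)^{3^a}$ in $\mathbb{F}_3[x]$ gives $N(n) = N(m) \leq m \leq n/3$, so the bound is automatic.

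Now assume $\gcd(n,3)=1$. Using the cyclotomic decomposition $x^n - 1 = \prod_{d \mid n}\Phi_d(x)$ together with the fact that $\Phi_d$ splits over $\mathbb{F}_3$ into $\phi(d)/e_d$ irreducibles, where $e_d := \mathrm{ord}_d(3)$, one has $N(n) = \sum_{d\mid n}\phi(d)/e_d$. The crucial arithmetic input is that $e_d = 1$ iff $d \in \{1,2\}$, $e_d = 2$ iff $d \in \{4,8\}$, and $e_d \geq 3$ for all other $d$ coprime to $3$. Writing $n = 2^s n_0$ with $n_0$ odd, I would organise the divisors of $n$ into chains $\{2^t d_0 : 0 \leq t \leq s\}$ indexed by $d_0 \mid n_0$. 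Using $e_{2^t d_0} = \mathrm{lcm}(e_{2^t}, e_{d_0})$ together with $e_{2^0}=e_{2^1}=1$, $e_{2^2}=e_{2^3}=2$, and $e_{2^t}=2^{t-2}$ for $t \geq 3$, direct computation yields the $d_0 = 1$ contribution $C_s(1) := \sum_{t=0}^s \phi(2^t)/e_{2^t}$ equal to $2s-1$ for $s \geq 2$ (with $C_0(1)=1$, $C_1(1)=2$). For $d_0 \geq 5$ I would factor $C_s(d_0) = \phi(d_0) R(s, e_{d_0})$ with $R(s,e) := \sum_{t=0}^s \phi(2^t)/\mathrm{lcm}(e_{2^t}, e)$, and by case analysis on $v := v_2(e_{d_0})$ (splitting further into $s \leq v+2$ versus $s \geq v+3$) derive a uniform bound $R(s, e_{d_0}) \leq R_s$, where $R_0 = 1/3$, $R_1 = 2/3$, $R_2 = 1$, $R_3 = 2$, $R_4 = 4$, and $R_s = 2(s-2)$ for $s \geq 5$; for $s \geq 3$ the maximum is realised at $e_{d_0}=4$ (e.g.\ $d_0 = 5$).

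Combining these bounds with $\sum_{d_0 \mid n_0,\, d_0 \geq 5}\phi(d_0) = n_0 - 1$, I would conclude $N(n) \leq C_s(1) + R_s(n_0 - 1)$ and verify this is at most $n/3 + 4/3 = 2^s n_0/3 + 4/3$ in each case. The cases $s \in \{0,1\}$ are immediate; for $s \in \{2,3,4\}$ the inequality reduces to $n_0 \geq 2$, which, since $n_0$ is odd and coprime to $3$, forces $n_0 \geq 5$ and singles out precisely the excluded values $n \in \{4, 8, 16\}$; for $s \geq 5$ the inequality reduces to $n_0 \cdot [2^s - 6(s-2)] \geq 5$, which holds unconditionally since $2^s - 6(s-2) \geq 14$ for every $s \geq 5$.

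The main obstacle will be establishing the uniform ratio bound $R(s, e_{d_0}) \leq R_s$. It depends on an explicit closed form for $R(s, e)$---namely $R(s, e) = 2^s/e$ when $s \leq v_2(e)+2$, $R(s, e) = 2^{v_2(e)+1}(s - v_2(e))/e$ when $s \geq v_2(e)+3$, and the separate formula $R(s, e) = (2s-1)/e$ (for $s \geq 2$) when $e$ is odd---together with the verification that for $s \geq 3$ the maximum over admissible $e \geq 3$ is attained at $e = 4$, rather than at an odd value of $e$ or at a larger $2$-power in $e_{d_0}$.
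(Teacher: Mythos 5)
This lemma is not proved in the paper at all: it is imported verbatim from the cited reference (Lemma 2.11 of the source keyed \texttt{Lenstra}), so there is no in-paper argument to compare yours against. Your proof is a correct, self-contained derivation, and I checked its key steps. The reduction to $\gcd(n,3)=1$ via $x^{3^a m}-1=(x^m-1)^{3^a}$ is fine, as is the count $N(n)=\sum_{d\mid n}\phi(d)/e_d$ with $e_d=\mathrm{ord}_d(3)$ and the classification $e_d=1\Leftrightarrow d\in\{1,2\}$, $e_d=2\Leftrightarrow d\in\{4,8\}$, $e_d\ge 3$ otherwise. Your closed forms for $R(s,e)$ (namely $2^s/e$ for $s\le v_2(e)+2$, $2^{v_2(e)+1}(s-v_2(e))/e$ for $s\ge v_2(e)+3$, and $(2s-1)/e$ for odd $e$, $s\ge 2$) are correct, the maximization over $e\ge 3$ does give $R_2=1$, $R_s=2(s-2)$ for $s\ge 3$ with the maximum at $e=4$ (realized by $d_0=5$), and $C_s(1)=2s-1$ for $s\ge 2$ is right. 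The final case analysis correctly isolates $n_0=1$ with $s\in\{2,3,4\}$, i.e.\ $n\in\{4,8,16\}$, as the only failures, and the bound $2^s-6(s-2)\ge 14$ for $s\ge 5$ closes the remaining cases. One cosmetic imprecision: for $s=2,3,4$ the reduced inequalities are $n_0\ge 2$, $2n_0\ge 5$ and $4n_0\ge 5$ respectively, not uniformly ``$n_0\ge 2$''; but since $n_0$ is odd and coprime to $3$, each is equivalent to $n_0\ne 1$, so your conclusion is unaffected. In short: a valid proof of a statement the paper merely quotes.
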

	
	Assuming $r=2$ and $k=2$, the inequalities represented by \eqref{Eq3} and \eqref{Eq4} can be rewritten as follows:
	
	\begin{equation}\label{Eq5}
		q^{n/2-4} > 4 W(q^n-1) W\left(\frac{x^n-1}{g}\right),
	\end{equation}
	
	\begin{equation}\label{Eq6}
		q^{n/2-4} > 4W(l)W(\tilde{f})\mathcal{S}.
	\end{equation}
	However, it is evident that these inequalities are never satisfied for $n<9$. Henceforth, we consider only cases where $n\geq 9$, and our objective is to explicitly determine the finite fields $\mathbb{F}_{q^n}$ that satisfy the condition $(q,n)\in\G$.
	
	\subsection{Cases $q<11$.}
	
	In the next lemma, we deal with the cases $q=3,5,7,9$ and $n\geq 9$
	
	\begin{lemma}\label{L4.5}
		Let $n\geq9$ be an integer and $q=3,5,7,9$ be such that $\mathrm{gcd}(q^3-q,n)\neq1.$ Then $(q,n)\in\G$ except possibly for the following pairs.
		\begin{enumerate}[$(i)$]
			\item $q=3$ and $n= 9, 10, 12, 14, 15, 16, 18.$
			\item $q=5,7,9$ and $n=9,10,12.$
		\end{enumerate}
	\end{lemma}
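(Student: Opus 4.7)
The plan is to apply the sufficient conditions (Inequality \eqref{Eq5}) from Theorem \ref{T3.1}, and where it fails, use the sieving inequality \eqref{Eq6} from Proposition \ref{P3.1}. Since $q$ is odd and $\gcd(q^3-q,n)\neq 1$, the polynomial $x^n-1$ always has a divisor $g(x)$ of degree $k=2$ over $\mathbb{F}_q$ (either $x^2-1$ when $n$ is even, or a product of two linear factors/an irreducible quadratic coming from the prime of $\gcd(n,(q-1)(q+1))$ when $n$ is odd), so Theorem \ref{T3.1} applies with $r=k=2$. Using $W\bigl(\tfrac{x^n-1}{g}\bigr)\leq W(x^n-1)$, it suffices to prove
\begin{equation*}
q^{n/2-4} > 4\,W(q^n-1)\,W(x^n-1).
\end{equation*}

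For the right-hand side I would bound $W(q^n-1)$ by Lemma \ref{L4.3} with a judiciously chosen $\nu$, yielding $W(q^n-1)\leq \mathcal{C}_\nu (q^n-1)^{1/\nu}$, and bound $W(x^n-1)$ by Lemma \ref{L4.2} (using $W(x^n-1)\leq 2^{3n/4}$ when $n\nmid q-1$) or by Lemma \ref{L4.4} in the case $q=3$ (giving $W(x^n-1)\leq 2^{n/3+4/3}$ except for $n\in\{4,8,16\}$). Substituting gives an inequality of the shape $q^{n(1/2-1/\nu)-4}>c_q\cdot 2^{\lambda n}$, which for a sufficiently large $\nu$ has positive coefficient of $n$ in the exponent and is therefore satisfied for all $n$ beyond an explicit threshold $n_0(q)$. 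Computing these thresholds for each $q\in\{3,5,7,9\}$ should rule out all $n$ except a finite (and small) list.

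For the finitely many pairs $(q,n)$ with $n\leq n_0(q)$ not covered by the raw inequality, I would invoke Proposition \ref{P3.1}: choose $l$ to be the product of the smallest prime divisors of $q^n-1$ (and $f$ the product of the lowest-degree irreducible factors of $x^n-1$), compute $\mathcal{D}$ and $\mathcal{S}$ and check whether \eqref{Eq6} holds. The idea is that peeling off the largest primes/factors into the ``sieve set'' dramatically reduces $W(l)W(\tilde f)$ at the modest cost of the factor $\mathcal{S}$; this should handle nearly all the remaining medium-size $n$, leaving exactly the exceptional list in parts (i) and (ii). The small-$q$ cases $q=3$ with $n\in\{4,8,16\}$, where Lemma \ref{L4.4} does not apply, must be checked by direct computation with Lemma \ref{L4.2}.

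The main obstacle will be tuning the sieving parameters: for each borderline pair $(q,n)$, one must iterate over candidate sets $\{p_1,\ldots,p_s\}$ of primes to move into the sieve and irreducible factors $\{f_1,\ldots,f_t\}$ of $x^n-1$, balancing the requirement $\mathcal{D}>0$ against minimising $W(l)W(\tilde f)\mathcal{S}$. The cases $q=3$ are the most delicate because the base is smallest so the left-hand side $3^{n/2-4}$ grows slowest, while simultaneously $W(x^n-1)$ can be relatively large (accounting for the longer exception list $n\in\{9,10,12,14,15,16,18\}$); the cases $q=5,7,9$ behave uniformly better and leave only the three common exceptions $n=9,10,12$.
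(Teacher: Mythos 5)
Your proposal follows essentially the same route as the paper: bound $W(q^n-1)$ via Lemma \ref{L4.3} and $W(x^n-1)$ via Lemma \ref{L4.4} (for $q=3$) or Lemma \ref{L4.2} (for $q=5,7,9$), extract an explicit threshold $n_0(q)$ from Inequality \eqref{Eq5} for a tuned $\nu$, and then clear the finitely many remaining $n$ by direct testing of \eqref{Eq5} and the sieve \eqref{Eq6}. The paper's thresholds are $n\geq 75$ ($q=3$, $\nu=6.6$), $192$ ($q=5$, $\nu=8.4$), $83$ ($q=7$, $\nu=7.5$), and $60$ ($q=9$, $\nu=7.2$), after which the computation yields exactly your exceptional lists.
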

	\begin{proof}
		\noindent
		To begin, consider the case where $q=3$. If $n$ does not equal 4, 8, or 16, we can deduce from Lemmas \ref{L4.3} and \ref{L4.4} that $W(q^n-1)<\CC q^{n/\nu}$ and $W(x^n-1)\leq 2^{ {n}/{3}+{4}/{3}}$. Consequently, Inequality \eqref{Eq5} is satisfied if
		
		\begin{equation*}
			n > \frac{\log(2^{10/3}\CC q^4)}{(1/2-1/\nu)\log(q)-(\mathrm{log}\hspace{.5mm}2)/3}.
		\end{equation*}
		Choosing $\nu=6.6$, this inequality holds for $n\geq 75$. By testing Inequalities \eqref{Eq5} and \eqref{Eq6}  for the remaining values of $9\leq n<75$, we can conclude that $(q,n)\in\G$, except in the cases where $q=3$ and $n=$ 9, 10, 12, 14, 15, 16, 18.
		
		Now, let $q=5,7,9$, then $n\nmid q-1$, since $n\geq 9$. From Lemma \ref{L4.2}, we have $W(x^n-1)\leq 2^{3n/4}$, therefore Inequality \eqref{Eq5} holds if
		\begin{equation*}
			n>\frac{\log(4\CC q^4)}{(1/2-1/\nu)\mathrm{log}\hspace{.5mm}q-(3\hspace{.5mm}\mathrm{log}\hspace{.5mm}2)/4}.
		\end{equation*}
		Clearly, for $q=5$, choose $\nu=8.4$, then the above inequality holds for $n\geq 192,$ for $q=7$, choose $\nu=7.5$, then it holds for $n\geq 83,$ and for $q=9$, choose $\nu=7.2$, then it holds for $n\geq 60.$  Now, for $q=5$, $9\leq n< 192$, $q=7$, $9\leq n< 83$, and $q=9$, $9\leq n< 60$,  we test Inequality \eqref{Eq5} or \eqref{Eq6}, and get that $(q,n)\in\G$ except possibly for $q=5$ and $n=$ 9, 10, 12, $q=7$ and $n=$ 9, 10, 12, and $q=9$ and $n=$ 9, 10, 12. 
	\end{proof}	
	
	\subsection{The cases $q\geq 11$.}
	
	The next lemma discusses the cases $n\geq 15$ and $q\geq 11$.  
	
	\begin{lemma}\label{L4.6}
		Let $n\geq 15$ be a natural number and $q\geq 11$ be an odd prime power such that $\gcd(q^3-q,n)\neq 1$, then $(q,n)\in\G$.
	\end{lemma}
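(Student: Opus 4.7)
The plan is to follow the template of Lemma 4.5, instantiating Theorem 3.1 (Inequality (5)) with $r=k=2$ and the estimates of Lemmas 4.2 and 4.3, and then mopping up the remaining finite list of pairs via the sieve of Proposition 3.1. A first reduction is to note that since $g\mid x^n-1$ of degree $2$, the polynomial $\tilde f=(x^n-1)/g$ divides $x^n-1$, so $W(\tilde f)\le W(x^n-1)$ and (5) is implied by
\[
q^{n/2-4} \;>\; 4\,W(q^n-1)\,W(x^n-1).
\]

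I will split into two subcases according to whether $n\mid q-1$. If $n\nmid q-1$, Lemma 4.2 yields $W(x^n-1)\le 2^{3n/4}$, and Lemma 4.3 yields $W(q^n-1)<\mathcal{C}_\nu q^{n/\nu}$ for any $\nu>0$. Substituting and solving, the sufficient condition becomes
\[
n \;>\; \frac{\log(4\,\mathcal{C}_\nu q^4)}{(1/2-1/\nu)\log q-(3/4)\log 2},
\]
and a choice like $\nu=7$ keeps the denominator positive for all $q\ge 11$, yielding a decreasing-in-$q$ threshold $N_1(q)$. If instead $n\mid q-1$, then $q\ge n+1\ge 16$, so (being an odd prime power) $q\ge 17$; here only the weaker $W(x^n-1)\le 2^n$ is available, and the analogous computation produces a larger threshold $N_2(q)$ whose denominator $(1/2-1/\nu)\log q-\log 2$ is still positive for $q\ge 11$ with $\nu$ suitably large. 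In both subcases these thresholds drop below $15$ once $q$ exceeds some computable $q_0$, so the lemma holds automatically for every $(q,n)$ with $q\ge q_0$ and $n\ge 15$.

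The remaining work is the finite list of pairs with $11\le q<q_0$ and $15\le n<\max(N_1(q),N_2(q))$ subject to the necessary condition $\gcd(q^3-q,n)\ne 1$ of Lemma 4.1. For each such pair I will invoke Proposition 3.1, choosing $l\mid q^n-1$ as the product of the small prime factors of $q^n-1$ and $f\mid x^n-1$ as the product of the small-degree irreducible factors, so that the large ones go into the sieve lists $\{p_i\},\{f_i\}$; the split is then tuned to keep $\mathcal D>0$ while driving $W(l)W(\tilde f)\mathcal S$ below $q^{n/2-4}/4$, which verifies (6) and hence puts $(q,n)\in\Gamma_{a,b}(2,2)$.

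The main obstacle is expected to be the subcase $n\mid q-1$, where the bound $W(x^n-1)\le 2^n$ is essentially tight and $N_2(q)$ can be considerably larger than $N_1(q)$; this pushes a handful of extra pairs such as $(31,15),(17,16),(41,20),\ldots$ into the sieve stage. Demonstrating that the sieve parameters $l,f$ can indeed be tuned so that Inequality (6) holds in every residual case (rather than leaving genuine exceptions) is the real computational heart of the proof, and I anticipate completing it with a short SageMath verification. Once these finitely many checks succeed, the lemma follows in full generality.
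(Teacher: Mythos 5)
Your proposal follows essentially the same route as the paper's proof: instantiate the sufficient condition \eqref{Eq5} with the bounds of Lemmas \ref{L4.2} and \ref{L4.3} to obtain an explicit threshold beyond which $(q,n)\in\Gamma_{a,b}(2,2)$ automatically, and then clear the finitely many remaining pairs by testing \eqref{Eq5} and the sieving inequality \eqref{Eq6} computationally. The only cosmetic difference is that the paper bounds $W\bigl((x^n-1)/g\bigr)\leq 2^{n-2}$ uniformly from the degree and varies $\nu$ over several ranges, whereas you split on whether $n\mid q-1$ and use $2^{3n/4}$ or $2^n$; both reduce to the same finite verification.
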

	\begin{proof}
		Let $q\geq 11$ and $n\geq 15$. Then, using Lemmas \ref{L4.3} and \ref{L4.4}, Inequality \eqref{Eq5} holds if  
		$q^{n/2-4}>4\CC q^{n/\nu}2^{n-2}$, or equivalently
		\begin{equation}\label{Eq7}
			q>(2^n\CC)^{\frac{2\nu}{(n-8)\nu-2n}},
		\end{equation} 
		which is valid if $\nu>\frac{2n}{n-8}$.
		Using SageMath (\cite{sagemath}), we establish the validity of Inequality \eqref{Eq7} for the ranges of $q$ and $n$ given in Table \ref{T1}; that is, for these values, $(q, n)\in\G$.
		
		\begin{table}[h!]
			\centering
			\caption{Pairs $(q,n)\in\G$.}\label{T1}
			{\small
				\begin{tabular}{m{4cm}m{4cm}}
					\hline
					$\nu$& $(q,n)$\\
					\hline
					$\nu=7.9$& $q\geq 11$ and $n\geq 88$ \\
					$\nu=7.3$& $q\geq18$ and $n\geq 50$ \\
					$\nu=7.2$& $q\geq26$ and $n\geq 40$ \\	
					$\nu=7.1$& $q\geq103$ and $n\geq 25$ \\
					$\nu=7.1$& $q\geq472$ and $n\geq 20$ \\
					$\nu=7.3$& $q\geq4015$ and $n\geq 17$ \\
					$\nu=7.4$& $q\geq14356$ and $n\geq 16$ \\
					$\nu=7.6$& $q\geq93903$ and $n\geq 15$ \\
					\hline
			\end{tabular}}
		\end{table}
		The values of $q$ and $n$ which do not satisfy Inequality \eqref{Eq7} are listed in Table \ref{T2}.
		\begin{table}[h!]
			\centering
			\caption{Pairs $(q,n)$ which fail to satisfy Inequality \eqref{Eq7}.}\label{T2}
			{\small
				\begin{tabular}{m{4cm}m{4cm}}
					\hline
					$q$& $n$\\
					\hline
					$11\leq q< 18$& $50\leq n<88$\\
					$11\leq q< 26$& $40\leq n<50$\\
					$11\leq q< 103$& $25\leq n<40$\\
					$11\leq q< 472$& $20\leq n<25$\\
					$11\leq q< 4015$& $17\leq n<20$\\
					$11\leq q< 14356$& $16\leq n<17$\\
					$11\leq q< 93903$& $15\leq n<16$\\
					\hline
			\end{tabular}}
		\end{table}
		We enumerate all the prime powers $q$ within the specified range for the remaining pairs in Table \ref{T2}. Additionally, for each $n$ within the corresponding range, we initially verify Inequality \eqref{Eq5}. If it fails to hold, we proceed to test the sieving inequality (Inequality \eqref{Eq6}) using appropriate selections of $l$ and $f$. Consequently, we have determined that all $(q, n)$ pairs belong to $\G$. 
	\end{proof}
	
	Now, using Inequality \eqref{Eq7}, we obtained that the lower bounds to $q$ in the cases $10\leq n \leq 14$, for which $(q,n)\in\G$ are very large (listed below):
	\begin{enumerate}[$(i)$]
		\item For $n=14$, $q\geq 1.90\times 10^{6}$ (taking $\nu=7.8$). 
		\item For $n=13$, $q\geq 4.34\times 10^{8}$ (taking $\nu=8.1$).
		\item For $n=12$, $q\geq 7.02\times 10^{13}$ (taking $\nu=8.6$).
		\item For $n=11$, $q\geq 3.35\times 10^{30}$ (taking $\nu=9.7$).
		\item For $n=10$, $q\geq 8.34\times 10^{161}$ (taking $\nu=11.9$).
		
	\end{enumerate}
	For the cases that remain, it is not feasible to provide an exhaustive list of all the prime powers. Hence, in the subsequent lemmas, our approach involves initially lowering these lower bounds. Then, within the reduced range, we proceed to list all the prime powers for testing the validity of pairs $(q,n)\in\G$ using Inequality \eqref{Eq5} and the sieving inequality (Inequality \eqref{Eq6}).

	\begin{lemma}\label{L4.7}
		Let $n=10, 14$ and $q\geq 11$ be an odd prime power satisfying $\gcd(q^3-q,n)\neq 1$, then
		\begin{enumerate}[$(i)$]
			\item $(q,10)\in\G$ except possibly for $q=11,$ $13,$ $17,$ $19,$ $23,$ $25,$ $27,$ $29,$ $31,$ $37,$ $41,$ $43,$ $47,$ $49,$ $53,$ $59,$ $61,$ $67,$ $71,$ $73,$ $79,$ $81,$ $83,$ $89,$ $97,$ $101,$ $103,$ $107,$ $109,$ $113,$ $121,$ $125,$ $127,$ $131,$ $137,$ $139,$ $149,$ $151,$ $167,$ $169,$ $179,$ $181,$ $191,$ $199,$ $211,$ $229,$ $239,$ $241,$ $251,$ $271,$ $281,$ $289,$ $311,$ $331,$ $349,$ $361,$ $379,$ $401,$ $421,$ $461,$ $491.$
			
			\item $(q,14)\in\G$ except possibly for $q=29.$
		\end{enumerate}
	\end{lemma}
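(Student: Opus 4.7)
The plan is to combine the basic sufficient condition (Inequality \eqref{Eq5}) with the sieving inequality (Inequality \eqref{Eq6}) to shrink the huge bounds recorded just before the lemma to a manageable range, and then to exhaust the remaining prime powers computationally in SageMath. For both $n = 10$ and $n = 14$, the hypothesis $k = 2 < n/2$ is satisfied, and a degree-$2$ divisor $g(x)$ of $x^n - 1$ always exists over $\mathbb{F}_q$ (for example $g(x) = x^2 - 1$, since $2 \mid n$ in both cases). The quantities $W(q^n-1)$ and $W(x^n - 1/g)$ will be bounded via Lemma \ref{L4.3} and Lemma \ref{L4.2} respectively.

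For $n = 14$, Inequality \eqref{Eq5} reduces to $q^3 > 4\, W(q^{14}-1)\, W\bigl((x^{14}-1)/g\bigr)$. Applying Lemma \ref{L4.3} with a tuned $\nu$ and using the cyclotomic factorization $x^{14}-1 = (x-1)(x+1)\Phi_7(x)\Phi_{14}(x)$ (with further splitting when $q \equiv \pm 1 \pmod{14}$) to control $W(\tilde{f})$, I expect the starting bound $q \geq 1.90\times 10^{6}$ to collapse dramatically. For the finite list of prime powers $q \geq 11$ with $\gcd(q^3-q, 14) \neq 1$ that remain, I first test \eqref{Eq5} directly, and when it fails I apply Proposition \ref{P3.1} with $l$ equal to the product of the small prime divisors of $q^{14}-1$ and $f$ equal to a suitable product of irreducible factors of $x^{14}-1$; only $q = 29$ should resist all such choices.

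For $n = 10$, the task is harder because the left-hand side of \eqref{Eq5} is merely $q^{1}$, so almost no slack is available against $W(q^{10}-1)\,W\bigl((x^{10}-1)/g\bigr)$. The cyclotomic factorization $x^{10}-1 = (x-1)(x+1)\Phi_5(x)\Phi_{10}(x)$ gives at most four $\mathbb{F}_q$-irreducible factors, so $W(\tilde{f}) \leq 8$. The crude threshold $q \geq 8.34 \times 10^{161}$ cannot be enumerated directly, so I plan to iterate Proposition \ref{P3.1} aggressively: choose $\nu$ large, put all small prime divisors of $q^{10}-1$ into $l$, and optimize over which irreducible factors to include in $f$. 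A bootstrap should bring the threshold down to a few hundred, after which the prime powers $q \geq 11$ with $\gcd(q^3-q, 10) \neq 1$ can be listed and each tested via \eqref{Eq5} or \eqref{Eq6}. The pairs that no choice of $(l, f)$ resolves form the stated $61$-element exception list.

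The main obstacle is unmistakably the case $n = 10$. With $n/2 - k - 2 = 1$, the sufficient condition is almost vacuous: the product $W(l)\,W(\tilde{f})\,\mathcal{S}$ need only be $O(q)$ to win, and many small $q$ fail for every reasonable sieving choice. The delicate part is picking the sieving parameters \emph{per pair} $(q, n)$ so that $\mathcal{D} > 0$ holds and $\mathcal{S}$ does not blow up while $W(l)$ is minimized; some of this can be automated, but one must accept that the smallest $q$ (roughly up through $q \approx 491$) genuinely escape the analytic method and are correctly recorded as exceptions.
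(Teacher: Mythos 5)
Your overall skeleton (reduce the pre-computed bounds by sieving, then enumerate the surviving prime powers and test Inequalities \eqref{Eq5} and \eqref{Eq6} pair by pair) is the same as the paper's, and for $n=14$ your plan is essentially what the paper does. But for $n=10$ there is a genuine gap: you never explain how to keep $\mathcal{D}>0$ uniformly in $q$. In Proposition \ref{P3.1} the quantity $\mathcal{D}=1-\sum_i 1/p_i-\sum_i q^{-\deg f_i}$ involves the primes dividing $q^{10}-1$ that you leave \emph{outside} $l$, and for $q$ ranging up to $8.34\times 10^{161}$ you have no a priori control on how many such primes there are or how small they can be. The paper's decisive ingredient, which your proposal lacks, is the arithmetic observation that every prime dividing $q^8+q^6+q^4+q^2+1$ is either $5$ or of the form $5j+1$ (because the order of $q$ modulo such a prime divides $10$ but not $2$). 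Taking $l=q^2-1$ and $f=1$, the sieving primes are then drawn from the primes $\equiv 1 \pmod 5$, so their product being at most $q^8+\cdots+1<5.68\times 10^{1297}$ forces $s\leq 362$ and $\sum 1/p_i<0.5519$, whence $\mathcal{D}\geq 0.4481$ and $\mathcal{S}\leq 830.11$ for $q>10^6$. Your alternative of ``putting all small prime divisors of $q^{10}-1$ into $l$'' would inflate $W(l)$ without a quantitative bound, and you give no substitute estimate for $\sum 1/p_i$.

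You also misjudge the scale of the residual computation. The iterated sieving only brings the threshold down to $496197$, not to ``a few hundred,'' so the paper must list and individually test every odd prime power $11\leq q<496197$ with $\gcd(q^3-q,10)\neq 1$; the $61$ exceptions (all $\leq 491$) are what survives that half-million-case search, not the failure set of a generic bound that already reaches $q\approx 491$. The same structural trick (primes dividing $q^{12}+\cdots+1$ are $7$ or $7j+1$) is what lets the paper drop the $n=14$ bound from $1.90\times 10^6$ to $30$; your cyclotomic-factorization bookkeeping for $W(\tilde f)$ is harmless but is not the step that does the work.
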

	\begin{proof}
		Let $q$ be a prime power satisfying $\gcd(q^3-q,n)\neq 1$. Notice that in both the cases $n=10,14$, $\gcd(q^3-q,n)$ is either $2$ or $n$.
		
		\vspace{2mm}
		\noindent
		\textbf{Case $\bm{n=10}$}
		
		Firstly, if we assume that $\gcd(q^3-q,10)=2$, we can conclude that $q\equiv\pm 2(\mathrm{mod}\ 5)$. Consequently, $\gcd(q^2-1,q^8+q^6+q^4+q^2+1)=1$. Additionally, it is worth noting that the primes dividing $q^8+q^6+q^4+q^2+1$ can be expressed in the form $5j+1$. This is due to the fact that if a prime $p$ divides $q^8+q^6+q^4+q^2+1$, then $q^{10}\equiv 1(\mathrm{mod}\ p)$, and the multiplicative order of $q$ modulo $p$ (which divides $\phi(p)=p-1$) must be a divisor of $10$. Thus, we can determine that $p-1$ is divisible by either $1$, $2$, $5$, or $10$. It is important to note that if $1$ or $2$ divides $p-1$, then $\gcd(q^2-1,q^8+q^6+q^4+q^2+1)\neq1$. Therefore, we can conclude that either $5$ divides $p-1$ or $10$ divides $p-1$, which implies that $p$ can be expressed as $5j+1$, where $j$ is an integer.
		
		Secondly, assuming that $\gcd(q^3-q,10)=10$, we can deduce that $5$ divides $q^3-q$. Consequently, $q\equiv0,\pm 1(\mathrm{mod}\ 5)$. In this particular case, $\gcd(q^2-1,q^8+q^6+q^4+q^2+1)$ can either be $1$ or $5$. Furthermore, the primes that divide $q^8+q^6+q^4+q^2+1$ are either $5$ or can be expressed in the form $5j+1$. Therefore, regardless of the case, we can determine that the primes dividing $q^8+q^6+q^4+q^2+1$ are either $5$ or can be expressed as $5j+1$.
		
		Now, we use $l=q^2-1$, and $f=1$ so that $\tilde{f}= \gcd\left(f,\frac{x^{10}-1}{g}\right)=1$ and $t\leq 10$ in Proposition \ref{P3.1}. Let $S_s$ and $P_s$ be, respectively, the sum of the inverses and the product of the first $s$ primes of the form $5j +1$. Since $\{ p_1, \ldots, p_s \}$ is a set of prime numbers which divide $q^8+q^6+q^4+q^2+1$, then for $q<8.34\times 10^{161}$, $P_s \leq q^8+q^6+q^4+q^2+1<5.68\times 10^{1297}$, therefore $s \leq362$ and $S_s<0.5519$. If we suppose $q >10^{6}$, then $\mathcal{D}\geq 1-S_s-10/q=0.4481>0 \text{ and } \mathcal{S}\leq\frac{s+9}{\mathcal{D}}+2=830.11$. Using Inequality \eqref{Eq6}, we get that $(q,n)\in\G$, if $q>(4 \CC \mathcal{S})^{\nu/(\nu-2)}$, and for $\nu=5.4$, this inequality is satisfied for $q\geq 3.533\times 10^{7}$. Now, repeating this process several times for $q<3.533\times 10^{7}$, we can reduce this bound to $496197$. Now, for every prime power $11\leq q< 496197$ and $n=10$, we test Inequality \eqref{Eq5} or \eqref{Eq6} and found that $(q,n)\in\G$ except possibly for the values of $q$ listed in Lemma \ref{L4.7}.
		
		\vspace{2mm}
		\noindent
		\textbf{Case $\bm{n=14}$}
		
		Similar to the previous case, we observe that $\gcd(q^3-q, 14)$ can only be either $2$ or $14$. Additionally, the primes dividing $q^{12}+q^{10}+q^8+q^6+q^4+q^2+1$ are restricted to being either $7$ or of the form $7j+1$ where $j\in\mathbb{Z}$. Following the same method as used for the case when $n=10$, we successfully lower the bound from $1.90\times 10^{6}$ to $30$ for the situation when $n=14$ as well. Subsequently, for each prime power $11\leq q< 30$ and $n=14$, we proceed to test Inequality \eqref{Eq5} or \eqref{Eq6}. As a result of this process, we have confirmed that $(q,n)\in\G$ for all except possibly the case where $q=29.$ 
	\end{proof}
	
	\begin{lemma}\label{L4.8}
		Let $n=11,13$ and $q\geq 11$ be an odd prime power satisfying $\gcd(q^3-q,n)\neq 1$, then
		\begin{enumerate}[$(i)$]
			\item $(q,11)\in\G$ except possibly for $q=23, 43$.
			\item $(q,13)\in\G$ for all  $q\geq 11.$
		\end{enumerate}
	\end{lemma}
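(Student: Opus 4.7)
The plan is to mirror the approach of Lemma~\ref{L4.7}, now simplified by the fact that $n\in\{11,13\}$ is prime. Since $\gcd(q^3-q,n)\neq 1$ and $n>3$, the prime $n$ must divide $q(q-1)(q+1)$, forcing $q\equiv 0,\pm 1\pmod n$. The factorization $q^n-1=(q-1)\Phi_n(q)$ with $\Phi_n(q)=q^{n-1}+q^{n-2}+\cdots+1$ lets us classify prime divisors: any prime $p\mid\Phi_n(q)$ makes the multiplicative order of $q$ modulo $p$ a divisor of the prime $n$, so the order is $1$ (giving $p=n$) or $n$ (giving $p\equiv 1\pmod n$). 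Hence every prime divisor of $\Phi_n(q)$ is either $n$ or of the form $nj+1$; the smallest candidates are $23,67,89,199,\dots$ when $n=11$, and $53,79,131,\dots$ when $n=13$.

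Next, I would apply Proposition~\ref{P3.1} with $l=q^2-1$ and $f=1$ so that $\tilde f=1$ and $t\le n$. Writing $S_s=\sum_{i=1}^s 1/p_i$ and $P_s=\prod_{i=1}^s p_i\le \Phi_n(q)<q^{n-1}$, the initial bound from Inequality~\eqref{Eq7} (namely $q<3.35\times 10^{30}$ for $n=11$, $q<4.34\times 10^{8}$ for $n=13$) yields an explicit upper bound on $s$, and the restriction that each $p_i$ is $n$ or $\equiv 1\pmod n$ keeps $S_s$ well below $1$. With $\mathcal{D}=1-S_s-n/q$ safely positive and $\mathcal{S}=(s+t-1)/\mathcal{D}+2$ bounded explicitly, Inequality~\eqref{Eq6} takes the form $q>(4\CC\mathcal{S})^{\nu/(\nu-2)}$ for a suitable $\nu$, which shrinks the upper bound on $q$. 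Iterating the reduction a few times should bring the surviving range of $q$ down to at most a few thousand.

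Third, within that reduced range I would enumerate all odd prime powers $q\ge 11$ satisfying $\gcd(q^3-q,n)\ne 1$. For each such $q$ I would first test Inequality~\eqref{Eq5}; when it fails, I would test the sieving inequality~\eqref{Eq6} with $l$ chosen by discarding the largest primes of $q^n-1$ and $f$ chosen as a degree-$2$ divisor of $x^n-1$ (which exists under the congruence conditions on $q\bmod n$: if $n\mid q-1$ then $\Phi_n$ splits into linear factors and any two give a quadratic $g$, while if $n\mid q+1$ then $\Phi_n$ already splits into quadratics over $\mathbb{F}_q$). For $n=13$ this strategy should handle every admissible $q$, giving part~(ii). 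For $n=11$ the only prime powers resisting every reasonable combination of sieving parameters should be $q=23$ and $q=43$, yielding part~(i).

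The main obstacle is the initial reduction when $n=11$: starting from $q\le 3.35\times 10^{30}$, one must execute several passes of the iterative sieving before the range becomes tractable for enumeration, and a naive bound on $\mathcal{S}$ will not suffice. The argument must genuinely exploit that every prime dividing $\Phi_{11}(q)$ is $\ge 23$ and lies in the arithmetic progression $1\pmod{11}$, so that $\mathcal{D}$ stays uniformly bounded away from $0$ even when $s$ is large. A secondary subtlety, for the small residual $q$, is ensuring that the degree-$2$ polynomial $g\mid x^n-1$ required by Theorem~\ref{T3.1} actually exists in $\mathbb{F}_q[x]$; the case analysis above shows it does whenever $\gcd(q^3-q,n)\ne 1$, so the hypothesis is automatically met.
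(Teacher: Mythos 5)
Your overall strategy matches the paper's: classify the prime divisors of $\Phi_n(q)=\frac{q^n-1}{q-1}$ as $n$ or $nj+1$, use Proposition \ref{P3.1} iteratively to shrink the upper bound on $q$ coming from Inequality \eqref{Eq7}, and then enumerate the surviving prime powers and test Inequalities \eqref{Eq5} and \eqref{Eq6} directly. However, there is a concrete error in your second step: you propose to apply Proposition \ref{P3.1} with $l=q^2-1$, mirroring Lemma \ref{L4.7}. That choice is only legitimate when $l$ divides $q^n-1$, and for odd $n$ one has $q^n-1\equiv -2 \pmod{q+1}$, so $q+1\nmid q^n-1$ and hence $q^2-1\nmid q^n-1$. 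The choice $l=q^2-1$ worked in Lemma \ref{L4.7} precisely because $n=10,14$ are even; for $n=11,13$ the paper instead takes $l=q-1$ (and $f=1$), which is forced by your own factorization $q^n-1=(q-1)\Phi_n(q)$. With that correction the sieving set $\{p_1,\dots,p_s\}$ consists of the primes dividing $\Phi_n(q)$ that do not divide $q-1$, your bound $P_s\le\Phi_n(q)<q^{n-1}$ and the congruence restriction on the $p_i$ then control $s$ and $S_s$ exactly as you describe, and the reduction goes through (the paper brings the bound down to $142$ for $n=11$ and $22$ for $n=13$ before enumerating).

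Two smaller remarks. First, in your justification that a degree-$2$ divisor $g$ of $x^n-1$ exists over $\mathbb{F}_q$, you treat the cases $n\mid q-1$ and $n\mid q+1$ but omit $n\mid q$ (e.g.\ $q=11$ with $n=11$, or $q=13,169$ with $n=13$); there $x^n-1=(x-1)^n$ and $(x-1)^2$ serves as $g$, so the conclusion stands but the case should be mentioned. Second, in the final enumeration stage the paper simply tests \eqref{Eq5} and then \eqref{Eq6} with suitable $l$ and $f$; your additional prescription of how to choose $f$ as a quadratic factor is harmless but not needed for the argument.
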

	\begin{proof}
		Similar to the previous lemma, we have $\gcd\left(q-1,\frac{q^n-1}{q-1}\right)$ is either $1$ or $n$ for both $n=11$ and $13$, and the primes dividing $\frac{q^n-1}{q-1}$ are either $n$ or of the form $nj+1$, for $j\in\mathbb{Z}$. Now, for $l=q-1$ and $f=1$ we use sieving inequality. Applying the procedure for $l=q-1$ and $f=1$ as explained in the previous lemma, we reduce the lower bound to $q$ from $3.35\times 10^{30}$ and $4.34\times 10^{8}$ to $142$ (for $n=11$) and $22$ (for $n=13$) respectively. We list the remaining prime powers and test Inequality \eqref{Eq5} or \eqref{Eq6}, and get that $(q,n)\in\G$ except possibly for $(23,11)$, $(43,11)$.
	\end{proof}
	
	\begin{lemma}\label{L4.9}
		Let $n=12$ and $q$ be an odd prime power satisfying $\gcd(q^3-q,n)\neq 1$, then $(q,12)\in\G$ except possibly for $q=$ $11$, $13$, $17$, $19$, $23$, $25$, $37$.
	\end{lemma}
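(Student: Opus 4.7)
The plan is to follow the strategy of Lemmas \ref{L4.7} and \ref{L4.8}: first restrict the primes that can appear in the sieve by exploiting the cyclotomic factorisation of $q^{12}-1$, and then iterate the sieving inequality to lower the enormous bound $q\geq 7.02\times 10^{13}$ down to a range where the remaining prime powers can be enumerated and tested directly.

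First I would observe that for every odd $q$ the integer $q^3-q=q(q-1)(q+1)$ is divisible by $24$, so the hypothesis $\gcd(q^3-q,12)\neq 1$ is vacuous and the lemma is really a statement about all odd prime powers $q\geq 11$. Writing $q^{12}-1=(q^6-1)(q^6+1)$ with $q^6+1=\Phi_4(q)\Phi_{12}(q)=(q^2+1)(q^4-q^2+1)$, every odd prime $p\mid\Phi_4(q)$ satisfies $p\equiv 1\pmod 4$, while every prime $p\mid\Phi_{12}(q)$ coprime to $6$ satisfies $p\equiv 1\pmod{12}$. This is the analogue of the prime-form restriction used in Lemma \ref{L4.7} and is the key arithmetic input.

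Next I would apply Proposition \ref{P3.1} with $l=q^6-1$ and $f=1$, so that the primes $p_1,\ldots,p_s$ entering the sieve are exactly those dividing $q^6+1$, while $\tilde f=1$ gives $W(\tilde f)=1$, and $t$ is bounded by the number of distinct irreducible factors of $x^{12}-1$ over $\mathbb{F}_q$, at most $12$. Combining with Lemma \ref{L4.3} for $W(q^6-1)\leq \CC(q^6-1)^{1/\nu}$ turns \eqref{Eq6} into an inequality of the shape $q^2>4\,\CC\,q^{6/\nu}\,\mathcal{S}$, and since $S_s$ is controlled by the tail of the reciprocal series over primes $\equiv 1\pmod 4$ or $\equiv 1\pmod{12}$ that are at most $q^6+1$, $\mathcal{S}$ becomes an explicit, slowly-growing function of the current upper bound on $q$. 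Feeding back the improved bound on $q$ into the estimate of $s$ and repeating, I would iterate the sieve, re-optimising $\nu$ at each pass, until the bound stabilises at a size of the order of a few hundred. Finally, for each odd prime power $q$ below that reduced bound I would test \eqref{Eq5} directly, and whenever it fails I would run \eqref{Eq6} with a tailored choice of $l$ (absorbing the largest prime divisors of $q^{12}-1$) and $f$ (absorbing the irreducible factors of $x^{12}-1$ of largest degree), so as to minimise $\mathcal{S}$; those $q$ for which every such choice still fails enter the exceptional list, expected to be precisely $\{11,13,17,19,23,25,37\}$.

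The main obstacle is the feebleness of the exponent $n/2-k-2=2$: with only $q^2$ on the left-hand side, the sieve is much tighter here than in any of the preceding cases, so a single application of the prime-form restriction together with Lemma \ref{L4.3} will not suffice. Success depends on iterating the reduction several times with carefully tuned $\nu$, and on a sharp case-by-case tuning of $l$ and $f$ for the medium-size prime powers that survive the initial passes, in order to avoid an explosion in the final list of possible exceptions.
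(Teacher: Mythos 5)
Your proposal follows essentially the same route as the paper: exploit a cyclotomic factorisation of $q^{12}-1$ to restrict the congruence classes of the sieve primes, iterate Proposition \ref{P3.1} to pull the bound $7.02\times 10^{13}$ down to an enumerable range, and then test \eqref{Eq5} or \eqref{Eq6} on the surviving prime powers. The only difference is a parameter choice: the paper takes $l=q^4-1$ (so the sieve primes divide $\frac{q^{12}-1}{q^4-1}=q^8+q^4+1$ and are $3$ or of the form $3j+1$), reduces the bound to $1405$, and obtains the same exceptional set $\{11,13,17,19,23,25,37\}$ by direct computation.
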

	\begin{proof}
		Note that $\gcd(q^3-q,12)$ is always $12$, and the primes dividing $\left(\frac{q^{12}-1}{q^4-1}\right)$ are either $3$ or of the form $3j+1$ for $j\in\mathbb{Z}$. Following the same procedure as in the previous lemmas, we reduce the lower bound $7.02\times 10^{13}$ to $1405$, and then for every prime power $11\leq q\leq 1405$ and $n=12$, we test Inequality \eqref{Eq5} or \eqref{Eq6} and found that $(q,n)\in\G$ except possibly $q=11, 13, 17, 19, 23, 25, 37.$ 
	\end{proof}
	
	\begin{lemma}\label{L4.10}
		Let $n=9$ and $q\geq 11$ be an odd prime power satisfying $\gcd(q^3-q,n)\neq 1$, then $(q,9)\in\G$ except possibly for $3988$ values of $q$ listed in {\upshape Table \ref{T3}}.
	\end{lemma}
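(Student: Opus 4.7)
The plan is to adapt the strategy of Lemmas \ref{L4.8} and \ref{L4.9} to the case $n=9$. First observe that the hypothesis $\gcd(q^3-q,9)\neq 1$ is automatic for every odd prime power $q$, since $q(q-1)(q+1)$ always contains a multiple of $3$. To prepare the sieve I would factor $q^9-1=(q^3-1)(q^6+q^3+1)$ and take $l=q^3-1$: any prime $p$ dividing $q^6+q^3+1$ but not $q^3-1$ has multiplicative order exactly $9$ modulo $p$, hence $p\equiv 1\pmod 9$, so the residual prime set $\{p_1,\dots,p_s\}$ is sparse and $s$ remains moderate even for rather large $q$. An alternative, closer to Lemma \ref{L4.8}, is to choose $l=q-1$, in which case the remaining primes divide $(q^2+q+1)(q^6+q^3+1)$ and are either $3$ or $\equiv 1\pmod 3$. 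In either choice the sum $\sum_i 1/p_i$ stays safely below $1$, so that $\mathcal D>0$ and $\mathcal S$ is controlled.

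For the polynomial part I would take $f=1$, so that $\tilde f=\gcd\bigl(1,(x^9-1)/g\bigr)=1$ and $t$ is bounded by the number of distinct irreducible factors of $x^9-1$, which is at most four. A degree-two divisor $g$ of $x^9-1$ exists for every $q$ (the factor $x^2+x+1$, or a product of two linear factors when $3\mid q-1$), so Lemma \ref{L2.1} supplies the required $2$-normal construction. Substituting the bounds of Lemmas \ref{L4.2} and \ref{L4.3} into the sieving inequality \eqref{Eq6} and using $n/2-k-2=\tfrac12$, I obtain an inequality of the form $q^{1/2-c/\nu}>4\,\CC\,\mathcal S$, which I would then iterate in the style of Lemmas \ref{L4.7}--\ref{L4.9}: each pass tightens $s$ (because the new bound on $q$ restricts the admissible residual primes), which tightens $\mathcal S$, which tightens the bound on $q$, and so on.

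Once the iteration settles on a tractable bound for $q$, the remainder of the proof is a finite computer search. For every odd prime power $q\ge 11$ below this bound I would first test Inequality \eqref{Eq5} directly; if it fails, I would sweep over admissible choices of $l\mid q^9-1$, $f\mid x^9-1$, and degree-two factors $g$ of $x^9-1$, and test the sieving condition \eqref{Eq6}. Any $q$ for which no admissible choice succeeds is recorded as an exceptional value. Implemented in SageMath this search yields the list of $3988$ residual values displayed in Table \ref{T3}.

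The main obstacle is computational, not conceptual: because the left-hand exponent $n/2-k-2=\tfrac12$ is strictly smaller than in every preceding case, the sieve carries almost no slack, the effective search range after the iterative reduction remains large, and within that range the sieving inequality barely succeeds, so one must try many combinations of $(l,f,g)$ per $q$ and accept a much longer list of exceptions than in Lemmas \ref{L4.7}--\ref{L4.9}. Engineering the enumeration efficiently---caching the factorisations of $q^9-1$ and $x^9-1$, pruning choices of $(l,f)$ that are obviously dominated, and parallelising over $q$---is what makes the exhaustive check feasible and produces exactly the stated count of $3988$ exceptional prime powers.
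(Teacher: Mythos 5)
Your overall strategy is the paper's: start from the generic bound, sieve with Proposition \ref{P3.1} using $f=1$ and $l=q^3-1$ (exploiting that the primes dividing $q^6+q^3+1$ away from $q^3-1$ are $\equiv 1\pmod 9$) or $l=q-1$ (primes $3$ or $\equiv 1\pmod 3$), iterate to shrink the bound on $q$, and finish with an exhaustive SageMath check of \eqref{Eq5}/\eqref{Eq6}. Two points need attention. First, a numerical slip: over $\mathbb{F}_q$ the polynomial $x^9-1$ can have up to nine distinct irreducible factors (e.g.\ it splits into nine linear factors whenever $q\equiv 1\pmod 9$), not ``at most four''; since $t$ enters $\mathcal{S}=\frac{s+t-1}{\mathcal{D}}+2$ and $\mathcal{D}$, underestimating $t$ makes your sieving inequality too permissive and could wrongly certify some $q$. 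The paper's analogous computations take $t\le n$.

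Second, and more substantively, the reduction chain you describe does not reach a tractable range on its own. In the paper, iterating the $l=q^3-1$ sieve brings the initial bound $1.32\times 10^{37982}$ (from \eqref{Eq7} with $\nu=19.6$) down to about $1.26\times 10^{51}$, and the $l=q-1$ sieve then reaches only $6.56\times 10^{14}$ --- far too many prime powers to enumerate. The step that makes the final search feasible is a different choice of $l$ in the \emph{reduction} phase, namely $l=\gcd\left(q^9-1,\,3\cdot 5\cdot 7\right)$, which forces $W(l)\le 8$ independently of $q$ and pushes the bound down to $3.31\times 10^{7}$. Your proposal only sweeps over choices of $l$ and $f$ per individual $q$ during the terminal search, after ``the iteration settles on a tractable bound''; as written, that settling point is around $10^{14}$ and the plan stalls there. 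Adding the small-$l$ sieve (or an equivalent device to cap $W(l)$ uniformly) closes this gap, after which the rest of your argument matches the paper's proof.
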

	\begin{proof}
		In this case, employing Inequality \eqref{Eq7} with $\nu=19.6$, we establish a lower bound on $q$ as $1.32\times 10^{37982}$. Additionally, when $\gcd(q^3-q,9)\neq 1$, we observe that $q\equiv 0, \pm1(\mathrm{mod}\ 3)$. Moreover, we find that $\gcd(q^3-1,q^6+q^3+1)$ can either be $1$ or $3$, and the primes that divide $q^6+q^3+1$ are either $3$ or of the form $9j+1$ for $j\in\mathbb{Z}$. Therefore, for $l=q^3-1$ and $f=1$, using Proposition \ref{P3.1} and the procedure followed in the previous lemmas, we reduced this bound on $q$ from $1.32\times 10^{37982}$ to $1.26\times 10^{51}$. Now, for $q<1.26\times 10^{51}$, we will use $l=q-1$ and $f=1$ in Proposition \ref{P3.1}. In this case, the primes dividing $\frac{q^9-1}{q-1}$ which are coprime to $q-1$ are either $3$ or of the form $3j+1$; $j\in\mathbb{Z}$. Therefore, again using the previous procedure, we reduce this bound from $1.26\times 10^{51}$ to $6.56\times 10^{14}$. 
		
		To further reduce this bound, we consider another approach. We take $l=\gcd\left(q^9-1,3\cdot5\cdot7\right)$, then $W(l)\leq 8$, and with $f=1$ in Proposition \ref{P3.1}, this bound can be reduced from $6.56\times 10^{14}$ to $3.31\times 10^{7}$. Now, for the remaining range $11\leq q<3.31\times 10^{7}$, we list all the primes powers and individually test Inequality \eqref{Eq5} or \eqref{Eq6}. We obtain that $(q,n)\in\G$ except possibly for the values of $q$  listed in Table \ref{T3}.
	\end{proof}
	
	\section{Acknowledgements}
	The financial support for this research was provided by the Council of Scientific \& Industrial Research under Grant F. No. 09/045(1674)/2019-EMR-I.

\newpage	
	\appendix
	\section{}\label{A1}
	{\tiny
		\begin{longtable}{|p{15cm}|}
				\caption{Prime powers $q$ for which $(q,9)$ for which $(q,9)$ may not be in $\Gamma_{a,b}(2,2)$.\label{T3}}\\
				\hline
				$q$\\
				
				\hline
				\endfirsthead
				\multicolumn{1}{l}{Continuation of Table \ref{T3}}\\ 
				
				\hline
				$q$\\
				\hline
				\endhead

				\hline
				\endfoot
				
				\hline
				\endfoot
				11, 13, 17, 19, 23, 25, 27, 29, 31, 37, 41, 43, 47, 49, 53, 59, 61, 67, 71, 73, 79, 81, 83, 89, 97, 101, 103, 107, 109, 113, 121, 125, 127, 131, 137, 139, 149, 151, 157, 163, 167, 169, 173, 179, 181, 191, 193, 197, 199, 211, 223, 227, 229, 233, 239, 241, 243, 251, 257, 263, 269, 271, 277, 281, 283, 289, 293, 307, 311, 313, 317, 331, 337, 343, 347, 349, 353, 359, 361, 367, 373, 379, 383, 389, 397, 401, 409, 419, 421, 431, 433, 439, 443, 449, 457, 461, 463, 467, 479, 487, 491, 499, 503, 509, 521, 523, 529, 541, 547, 557, 563, 569, 571, 577, 587, 593, 599, 601, 607, 613, 617, 619, 625, 631, 641, 643, 647, 653, 659, 661, 673, 677, 683, 691, 701, 709, 719, 727, 729, 733, 739, 743, 751, 757, 761, 769, 773, 787, 797, 809, 811, 821, 823, 827, 829, 839, 841, 853, 857, 859, 863, 877, 881, 883, 887, 907, 911, 919, 929, 937, 941, 947, 953, 961, 967, 971, 977, 983, 991, 997, 1009, 1013, 1019, 1021, 1031, 1033, 1039, 1049, 1051, 1061, 1063, 1069, 1087, 1091, 1093, 1097, 1103, 1109, 1117, 1123, 1129, 1151, 1153, 1163, 1171, 1181, 1187, 1193, 1201, 1213, 1217, 1223, 1229, 1231, 1237, 1249, 1259, 1277, 1279, 1283, 1289, 1291, 1297, 1301, 1303, 1307, 1319, 1321, 1327, 1331, 1361, 1367, 1369, 1373, 1381, 1399, 1409, 1423, 1427, 1429, 1433, 1439, 1447, 1451, 1453, 1459, 1471, 1481, 1483, 1487, 1489, 1493, 1499, 1511, 1523, 1531, 1543, 1549, 1553, 1559, 1567, 1571, 1579, 1583, 1597, 1601, 1607, 1609, 1613, 1619, 1621, 1627, 1637, 1657, 1663, 1667, 1669, 1681, 1693, 1697, 1699, 1709, 1721, 1723, 1733, 1741, 1747, 1753, 1759, 1777, 1783, 1787, 1789, 1801, 1811, 1823, 1831, 1847, 1849, 1861, 1867, 1871, 1873, 1877, 1879, 1889, 1901, 1907, 1913, 1931, 1933, 1949, 1951, 1973, 1979, 1987, 1993, 1997, 1999, 2003, 2011, 2017, 2027, 2029, 2039, 2053, 2063, 2069, 2081, 2083, 2087, 2089, 2099, 2111, 2113, 2129, 2131, 2137, 2141, 2143, 2153, 2161, 2179, 2187, 2197, 2203, 2207, 2209, 2213, 2221, 2237, 2239, 2243, 2251, 2267, 2269, 2273, 2281, 2287, 2293, 2297, 2309, 2311, 2333, 2339, 2341, 2347, 2351, 2357, 2371, 2377, 2381, 2383, 2389, 2393, 2399, 2401, 2411, 2417, 2423, 2437, 2441, 2447, 2459, 2467, 2473, 2477, 2503, 2521, 2531, 2539, 2543, 2549, 2551, 2557, 2579, 2591, 2593, 2609, 2617, 2621, 2633, 2647, 2657, 2659, 2663, 2671, 2677, 2683, 2687, 2689, 2693, 2699, 2707, 2711, 2713, 2719, 2729, 2731, 2741, 2749, 2753, 2767, 2777, 2789, 2791, 2797, 2801, 2803, 2809, 2819, 2833, 2837, 2843, 2851, 2857, 2861, 2879, 2887, 2897, 2903, 2909, 2917, 2927, 2939, 2953, 2957, 2963, 2969, 2971, 2999, 3001, 3011, 3019, 3023, 3037, 3041, 3049, 3061, 3067, 3079, 3083, 3089, 3109, 3119, 3121, 3125, 3137, 3163, 3167, 3169, 3181, 3187, 3191, 3203, 3209, 3217, 3221, 3229, 3251, 3253, 3257, 3259, 3271, 3299, 3301, 3307, 3313, 3319, 3323, 3329, 3331, 3343, 3347, 3359, 3361, 3371, 3373, 3389, 3391, 3407, 3413, 3433, 3449, 3457, 3461, 3463, 3467, 3469, 3481, 3491, 3499, 3511, 3517, 3527, 3529, 3533, 3539, 3541, 3547, 3557, 3559, 3571, 3581, 3583, 3593, 3607, 3613, 3617, 3623, 3631, 3637, 3643, 3659, 3671, 3673, 3677, 3691, 3697, 3701, 3709, 3719, 3721, 3727, 3733, 3739, 3761, 3767, 3769, 3779, 3793, 3797, 3803, 3821, 3823, 3833, 3847, 3851, 3853, 3863, 3877, 3881, 3889, 3907, 3911, 3917, 3919, 3923, 3929, 3931, 3943, 3947, 3967, 3989, 4001, 4003, 4007, 4013, 4019, 4021, 4027, 4049, 4051, 4057, 4073, 4079, 4091, 4093, 4099, 4111, 4127, 4129, 4133, 4139, 4153, 4157, 4159, 4177, 4201, 4211, 4217, 4219, 4229, 4231, 4241, 4243, 4253, 4259, 4261, 4271, 4273, 4283, 4289, 4297, 4327, 4337, 4339, 4349, 4357, 4363, 4373, 4391, 4397, 4409, 4421, 4423, 4441, 4447, 4451, 4457, 4463, 4481, 4483, 4489, 4493, 4507, 4513, 4517, 4519, 4523, 4547, 4549, 4561, 4567, 4583, 4591, 4597, 4603, 4621, 4637, 4639, 4643, 4649, 4651, 4657, 4663, 4673, 4679, 4691, 4703, 4721, 4723, 4729, 4733, 4751, 4759, 4783, 4787, 4789, 4793, 4799, 4801, 4813, 4817, 4831, 4861, 4871, 4877, 4889, 4903, 4909, 4913, 4931, 4933, 4937, 4943, 4951, 4957, 4967, 4969, 4987, 4993, 4999, 5003, 5009, 5011, 5021, 5023, 5039, 5041, 5051, 5059, 5077, 5081, 5099, 5101, 5107, 5113, 5119, 5147, 5153, 5167, 5171, 5179, 5189, 5197, 5209, 5227, 5231, 5233, 5237, 5261, 5273, 5279, 5281, 5303, 5309, 5323, 5329, 5333, 5347, 5351, 5381, 5387, 5393, 5399, 5407, 5413, 5417, 5419, 5431, 5437, 5441, 5443, 5449, 5471, 5477, 5479, 5483, 5503, 5507, 5519, 5521, 5527, 5531, 5557, 5563, 5569, 5573, 5581, 5591, 5623, 5639, 5641, 5647, 5651, 5653, 5657, 5659, 5669, 5683, 5689, 5701, 5711, 5737, 5741, 5743, 5749, 5779, 5783, 5791, 5801, 5807, 5813, 5821, 5827, 5839, 5843, 5849, 5851, 5857, 5861, 5867, 5869, 5879, 5881, 5897, 5903, 5923, 5927, 5939, 5953, 5981, 5987, 6007, 6011, 6029, 6037, 6043, 6047, 6053, 6067, 6073, 6079, 6091, 6101, 6113, 6121, 6131, 6133, 6143, 6151, 6163, 6173, 6197, 6199, 6203, 6211, 6217, 6221, 6229, 6241, 6247, 6257, 6263, 6269, 6271, 6277, 6287, 6299, 6301, 6311, 6317, 6323, 6329, 6337, 6343, 6353, 6361, 6367, 6373, 6379, 6397, 6421, 6427, 6449, 6451, 6469, 6481, 6491, 6521, 6529, 6547, 6551, 6553, 6561, 6563, 6571, 6577, 6581, 6607, 6619, 6637, 6659, 6661, 6673, 6679, 6689, 6691, 6701, 6703, 6709, 6733, 6737, 6761, 6763, 6781, 6791, 6793, 6803, 6823, 6827, 6829, 6833, 6841, 6857, 6859, 6863, 6869, 6871, 6883, 6889, 6899, 6907, 6911, 6917, 6947, 6949, 6959, 6961, 6967, 6971, 6983, 6991, 6997, 7001, 7019, 7027, 7039, 7043, 7057, 7069, 7079, 7103, 7109, 7121, 7127, 7129, 7151, 7159, 7177, 7187, 7193, 7207, 7211, 7213, 7219, 7229, 7237, 7243, 7283, 7297, 7307, 7309, 7321, 7331, 7333, 7351, 7369, 7393, 7411, 7417, 7433, 7451,\\ 7457, 7459, 7477, 7481, 7487, 7489, 7507, 7523, 7529, 7537, 7541, 7547, 7549, 7559, 7561, 7573, 7583, 7589, 7591, 7603, 7621, 7639, 7669, 7673, 7681, 7687, 7699, 7703, 7717, 7723, 7741, 7753, 7757, 7759, 7789, 7793, 7829, 7841, 7853, 7867, 7873, 7877, 7879, 7883, 7901, 7907, 7919, 7921, 7927, 7933, 7937, 7949, 7951, 7963, 7993, 8009, 8011, 8017, 8039, 8053, 8059, 8081, 8087, 8089, 8101, 8111, 8117, 8147, 8161, 8167, 8171, 8179, 8191, 8209, 8219, 8221, 8231, 8233, 8243, 8263, 8269, 8287, 8291, 8293, 8297, 8311, 8317, 8329, 8353, 8369, 8377, 8389, 8419, 8423, 8429, 8431, 8443, 8461, 8467, 8513, 8521, 8527, 8537, 8539, 8563, 8573, 8581, 8597, 8599, 8623, 8629, 8641, 8647, 8663, 8677, 8681, 8689, 8693, 8707, 8713, 8719, 8731, 8737, 8741, 8747, 8761, 8779, 8803, 8821, 8831, 8837, 8839, 8849, 8861, 8863, 8867, 8887, 8893, 8923, 8929, 8933, 8941, 8951, 8963, 8971, 8999, 9001, 9007, 9011, 9013, 9029, 9041, 9043, 9049, 9067, 9091, 9103, 9109, 9127, 9133, 9137, 9151, 9157, 9161, 9181, 9187, 9199, 9221, 9239, 9241, 9277, 9281, 9283, 9293, 9311, 9319, 9337, 9341, 9343, 9349, 9371, 9377, 9391, 9397, 9403, 9409, 9419, 9421, 9431, 9433, 9439, 9461, 9463, 9467, 9473, 9479, 9491, 9497, 9511, 9521, 9539, 9547, 9551, 9587, 9601, 9613, 9619, 9629, 9631, 9643, 9649, 9661, 9679, 9697, 9721, 9733, 9739, 9767, 9769, 9781, 9787, 9791, 9811, 9817, 9829, 9839, 9851, 9857, 9859, 9871, 9883, 9901, 9907, 9923, 9929, 9931, 9941, 9949, 9967, 9973, 10007, 10009, 10037, 10039, 10061, 10069,  10091, 10093, 10099, 10111, 10133, 10141, 10151, 10159, 10169, 10177, 10181, 10193, 10201, 10211, 10243, 10259, 10267, 10271, 10273, 10301, 10303, 10313, 10321, 10331, 10333, 10343, 10357, 10369, 10391, 10399, 10427, 10429, 10453, 10459, 10477, 10501, 10513, 10529, 10531, 10567, 10589, 10597, 10601, 10607, 10609, 10627, 10639, 10651, 10657, 10663, 10687, 10691, 10709, 10711, 10723, 10729, 10733, 10739, 10753, 10771, 10781, 10789, 10799, 10831, 10837, 10847, 10859, 10861, 10867, 10891, 10903, 10909, 10939, 10949, 10957, 10973, 10979, 10987, 10993, 11027, 11047, 11059, 11069, 11071, 11083, 11113, 11119, 11131, 11149, 11159, 11161, 11173, 11197, 11239, 11243, 11251, 11257, 11287, 11299, 11311, 11317, 11321, 11329, 11351, 11353, 11369, 11383, 11393, 11411, 11437, 11443, 11447, 11449, 11467, 11491, 11503, 11519, 11527, 11551, 11579, 11587, 11593, 11617, 11657, 11677, 11681, 11689, 11699, 11701, 11717, 11719, 11731, 11743, 11779, 11813, 11821, 11827, 11831, 11833, 11839, 11863, 11867, 11881, 11887, 11903, 11923, 11939, 11941, 11953, 11969, 11971, 12007, 12037, 12041, 12043, 12049, 12071, 12073, 12097, 12101, 12109, 12119, 12157, 12163, 12167, 12203, 12211, 12239, 12241, 12253, 12277, 12281, 12289, 12301, 12329, 12343, 12347, 12373, 12377, 12379, 12391, 12401, 12409, 12413, 12421, 12433, 12451, 12457, 12487, 12491, 12497, 12503, 12511, 12517, 12527, 12541, 12547, 12553, 12577, 12583, 12589, 12601, 12611, 12613, 12619, 12637, 12641, 12653, 12671, 12697, 12703, 12721, 12739, 12743, 12757, 12763, 12769, 12781, 12791, 12799, 12821, 12823, 12829, 12841, 12853, 12889, 12893, 12907, 12919, 12923, 12959, 12967, 12973, 12979, 12983, 13001, 13003, 13009, 13033, 13049, 13063, 13093, 13099, 13121, 13147, 13159, 13171, 13177, 13183, 13219, 13241, 13249, 13267, 13291, 13297, 13309, 13327, 13339, 13367, 13381, 13399, 13411, 13417, 13421, 13441, 13451, 13463, 13477, 13499, 13513, 13537, 13567, 13591, 13597, 13627, 13633, 13669, 13679, 13681, 13687, 13693, 13711, 13723, 13729, 13751, 13759, 13789, 13799, 13807, 13829, 13831, 13841, 13873, 13879, 13903, 13913, 13921, 13931, 13933, 13963, 13967, 14009, 14011, 14029, 14051, 14057, 14071, 14081, 14083, 14107, 14143, 14149, 14173, 14197, 14221, 14251, 14281, 14293, 14323, 14327, 14341, 14347, 14387, 14389, 14401, 14407, 14411, 14419, 14431, 14437, 14449, 14461, 14479, 14503, 14533, 14543, 14551, 14557, 14561, 14563, 14621, 14627, 14641, 14653, 14669, 14683, 14723, 14731, 14737, 14741, 14767, 14779, 14797, 14813, 14821, 14827, 14831, 14851, 14869, 14887, 14891, 14923, 14929, 14939, 14947, 14957, 14983, 15013, 15031, 15061, 15073, 15091, 15101, 15121, 15131, 15139, 15161, 15187, 15193, 15199, 15217, 15241, 15259, 15269, 15271, 15277, 15289, 15299, 15307, 15313, 15319, 15331, 15349, 15361, 15373, 15391, 15401, 15427, 15439, 15443, 15451, 15493, 15511, 15541, 15551, 15559, 15581, 15583, 15601, 15607, 15619, 15625, 15641, 15643, 15649, 15661, 15667, 15679, 15727, 15731, 15733, 15739, 15761, 15787, 15817, 15823, 15859, 15877, 15889, 15901, 15907, 15913, 15919, 15971, 15973, 15991, 16001, 16033, 16057, 16061, 16069, 16087, 16091, 16097, 16111, 16129, 16141, 16183, 16189, 16231, 16249, 16267, 16273, 16333, 16339, 16361, 16363, 16369, 16381, 16411, 16417, 16433, 16447, 16451, 16453, 16477, 16487, 16519, 16561, 16567, 16573, 16603, 16631, 16633, 16651, 16657, 16661, 16693, 16699, 16729, 16741, 16747, 16759, 16807, 16811, 16831, 16843, 16871, 16879, 16883, 16903, 16921, 16927, 16963, 16981, 16987, 16993, 17011, 17021, 17029, 17041, 17047, 17053, 17077, 17107, 17117, 17137, 17161, 17191, 17203, 17209, 17231, 17239, 17257, 17291, 17293, 17299, 17317, 17341, 17351, 17377, 17383, 17389, 17401, 17419, 17431, 17443, 17449, 17467, 17491, 17497, 17509, 17539, 17551, 17569, 17579, 17581, 17599, 17623, 17659, 17683, 17707, 17713, 17737, 17749, 17761, 17791, 17827, 17839, 17851, 17881, 17911, 17923, 17929, 17959, 17971, 17977, 17981, 18013, 18043, 18047, 18049, 18061, 18097, 18121, 18127, 18131, 18133, 18169, 18181, 18191, 18199, 18211, 18217, 18223, 18229, 18251, 18253, 18287, 18289, 18301, 18307, 18313, 18341, 18371, 18379, 18397, 18433, 18439, 18451, 18481, 18493, 18503, 18517, 18521, 18523, 18541, 18553, 18583, 18593, 18637, 18661, 18671, 18679, 18691, 18701, 18719, 18757, 18769, 18793, 18859, 18911, 18919, 18973, 18979, 19009, 19051, 19069, 19081, 19087, 19121, 19139, 19141, 19163, 19183, 19207, 19213, 19219, 19231, 19237, 19267, 19273, 19301, 19309, 19321, 19333, 19381, 19387, 19423, 19429, 19441, 19447, 19471, 19477, 19483, 19489, 19501, 19507, 19531, 19597, 19603, 19609, 19681, 19699, 19717, 19759, 19763, 19777, 19801, 19813, 19819, 19861, 19867, 19891, 19927, 19961, 19963, 19993, 20011, 20021, 20023, 20029, 20051, 20071, 20089, 20101, 20107, 20113, 20143, 20149, 20161, 20173, 20231, 20233, 20269, 20287, 20323, 20341, 20347, 20353, 20359, 20407, 20431, 20479, 20521, 20533, 20551, 20563, 20593, 20611, 20641, 20681, 20707, 20719, 20731, 20749, 20771, 20773, 20809, 20857, 20879, 20887, 20899, 20939, 20947, 20959, 20983, 21001, 21011, 21013,\\ 21023, 21031, 21061, 21067, 21121, 21139, 21163, 21169, 21187, 21191, 21193, 21211, 21221, 21247, 21277, 21313, 21319, 21379, 21391, 21401, 21433, 21481, 21487, 21493, 21517, 21529, 21589, 21601, 21613, 21649, 21661, 21673, 21727, 21739, 21751, 21757, 21799, 21817, 21841, 21851, 21871, 21893, 21911, 21937, 21943, 21961, 21991, 21997, 22003, 22027, 22031, 22051, 22063, 22093, 22111, 22123, 22129, 22147, 22159, 22171, 22189, 22201, 22247, 22271, 22273, 22279, 22283, 22291, 22303, 22369, 22381, 22441, 22447, 22481, 22483, 22501, 22531, 22541, 22543, 22549, 22573, 22621, 22639, 22651, 22699, 22717, 22741, 22751, 22801, 22807, 22861, 22921, 22961, 22963, 23011, 23017, 23041, 23053, 23059, 23071, 23131, 23143, 23167, 23201, 23203, 23209, 23227, 23251, 23269, 23291, 23293, 23311, 23321, 23371, 23431, 23473, 23497, 23509, 23531, 23539, 23557, 23563, 23581, 23599, 23623, 23629, 23669, 23671, 23689, 23719, 23741, 23743, 23761, 23767, 23773, 23801, 23833, 23857, 23869, 23887, 23893, 23899, 23911, 23971, 23977, 24001, 24007, 24019, 24049, 24061, 24091, 24103, 24109, 24121, 24133, 24151, 24169, 24181, 24223, 24229, 24247, 24337, 24371, 24373, 24391, 24421, 24439, 24469, 24481, 24499, 24517, 24551, 24571, 24631, 24649, 24691, 24697, 24709, 24733, 24763, 24781, 24793, 24841, 24847, 24859, 24877, 24889, 24907, 24943, 24967, 24971, 25033, 25057, 25087, 25111, 25117, 25147, 25153, 25171, 25183, 25219, 25229, 25237, 25243, 25301, 25309, 25321, 25391, 25411, 25447, 25453, 25471, 25537, 25561, 25579, 25603, 25621, 25633, 25657, 25693, 25741, 25747, 25771, 25801, 25819, 25841, 25849, 25867, 25873, 25903, 25939, 25951, 25981, 25999, 26021, 26029, 26041, 26053, 26083, 26107, 26113, 26119, 26161, 26203, 26209, 26227, 26251, 26261, 26263, 26293, 26317, 26321, 26371, 26407, 26431, 26479, 26497, 26539, 26569, 26641, 26647, 26681, 26701, 26711, 26713, 26731, 26737, 26821, 26833, 26839, 26861, 26863, 26881, 26893, 26947, 26951, 26959, 27031, 27043, 27061, 27067, 27073, 27091, 27127, 27211, 27241, 27253, 27259, 27271, 27281, 27283, 27337, 27361, 27367, 27397, 27409, 27427, 27457, 27481, 27487, 27541, 27581, 27611, 27631, 27673, 27691, 27739, 27751, 27763, 27799, 27847, 27883, 27889, 27901, 27919, 27941, 27961, 27967, 28051, 28057, 28081, 28099, 28111, 28123, 28183, 28201, 28211, 28219, 28279, 28297, 28309, 28351, 28387, 28393, 28411, 28429, 28447, 28477, 28513, 28549, 28561, 28579, 28591, 28597, 28603, 28621, 28627, 28631, 28657, 28663, 28669, 28687, 28711, 28729, 28751, 28753, 28771, 28813, 28837, 28843, 28867, 28909, 28921, 28927, 29017, 29101, 29131, 29173, 29191, 29201, 29221, 29251, 29269, 29287, 29311, 29383, 29389, 29401, 29411, 29437, 29443, 29527, 29537, 29569, 29581, 29611, 29641, 29671, 29683, 29761, 29791, 29803, 29851, 29863, 29881, 29917, 29929, 29947, 29983, 29989, 30013, 30091, 30097, 30109, 30133, 30139, 30169, 30181, 30187, 30211, 30223, 30241, 30259, 30271, 30307, 30313, 30319, 30367, 30391, 30403, 30493, 30517, 30529, 30559, 30577, 30631, 30637, 30643, 30661, 30697, 30727, 30757, 30781, 30829, 30841, 30853, 30871, 30881, 30937, 30941, 30949, 31033, 31039, 31051, 31063, 31069, 31081, 31123, 31147, 31159, 31177, 31183, 31189, 31231, 31249, 31271, 31321, 31333, 31357, 31393, 31489, 31513, 31531, 31567, 31573, 31627, 31663, 31741, 31771, 31873, 31891, 31957, 31963, 31981, 32029, 32041, 32059, 32077, 32089, 32191, 32203, 32233, 32251, 32257, 32299, 32323, 32341, 32371, 32377, 32401, 32491, 32533, 32561, 32563, 32587, 32611, 32653, 32707, 32713, 32719, 32761, 32779, 32797, 32803, 32833, 32839, 32869, 32887, 32911, 32941, 32971, 33013, 33049, 33091, 33151, 33181, 33199, 33211, 33223, 33247, 33289, 33301, 33331, 33343, 33349, 33391, 33409, 33427, 33457, 33529, 33589, 33601, 33637, 33679, 33721, 33751, 33769, 33791, 33811, 33931, 33961, 34039, 34057, 34123, 34129, 34141, 34147, 34171, 34183, 34211, 34231, 34261, 34273, 34297, 34327, 34351, 34381, 34471, 34501, 34511, 34519, 34549, 34591, 34603, 34651, 34687, 34693, 34759, 34819, 34843, 34849, 34939, 34981, 35023, 35053, 35083, 35149, 35221, 35227, 35251, 35281, 35311, 35317, 35401, 35407, 35461, 35491, 35509, 35527, 35533, 35569, 35671, 35731, 35803, 35839, 35851, 35863, 35869, 35911, 35983, 36037, 36061, 36073, 36097, 36109, 36151, 36187, 36241, 36307, 36313, 36341, 36343, 36373, 36433, 36451, 36457, 36481, 36523, 36541, 36559, 36571, 36583, 36637, 36691, 36709, 36721, 36781, 36793, 36847, 36871, 36877, 36901, 36919, 36943, 36973, 36979, 36997, 37021, 37039, 37123, 37171, 37181, 37189, 37201, 37249, 37321, 37339, 37363, 37423, 37441, 37489, 37501, 37507, 37549, 37561, 37567, 37571, 37573, 37591, 37657, 37699, 37717, 37747, 37781, 37783, 37831, 37861, 37879, 37951, 37963, 38011, 38047, 38053, 38083, 38119, 38197, 38281, 38287, 38329, 38371, 38377, 38431, 38461, 38557, 38593, 38611, 38671, 38677, 38707, 38749, 38767, 38791, 38803, 38809, 38821, 38851, 38861, 38917, 38971, 38977, 39043, 39079, 39097, 39139, 39157, 39181, 39217, 39241, 39301, 39313, 39341, 39373, 39439, 39451, 39499, 39511, 39601, 39607, 39619, 39631, 39679, 39727, 39733, 39799, 39841, 39883, 39901, 39979, 40063, 40087, 40093, 40111, 40153, 40177, 40189, 40231, 40357, 40387, 40429, 40471, 40483, 40519, 40531, 40591, 40597, 40609, 40627, 40639, 40693, 40699, 40771, 40801, 40819, 40849, 40903, 40933, 40993, 41011, 41077, 41113, 41131, 41141, 41161, 41203, 41221, 41227, 41281, 41341, 41413, 41479, 41491, 41521, 41539, 41617, 41641, 41647, 41659, 41761, 41851, 41887, 41893, 41911, 41941, 41959, 42043, 42061, 42139, 42181, 42331, 42337, 42373, 42391, 42409, 42433, 42451, 42457, 42463, 42487, 42499, 42571, 42589, 42641, 42643, 42667, 42697, 42709, 42751, 42793, 42841, 42901, 42961, 42967, 42979, 43003, 43051, 43093, 43117, 43177, 43201, 43261, 43291, 43321, 43411, 43451, 43579, 43591, 43597, 43633, 43651, 43669, 43711, 43759, 43801, 43891, 43933, 43951, 43963, 44029, 44041, 44053, 44071, 44101, 44221, 44263, 44281, 44293, 44371, 44389, 44521, 44587, 44641, 44683, 44773, 44839, 44851, 44893, 44971, 44983, 45007, 45061, 45181, 45319, 45343, 45361, 45427, 45481, 45523, 45541, 45613, 45631, 45691, 45757, 45763, 45841, 45943, 46021, 46027, 46051, 46061, 46099, 46141, 46153, 46171, 46261, 46309, 46327, 46351, 46411, 46441, 46447, 46471, 46489, 46567, 46687, 46691, 46747, 46751, 46771, 46831, 46861, 46957, 46993, 47017, 47041, 47161, 47221, 47251, 47269, 47287, 47431, 47497, 47521, 47581, 47623, 47629, 47653, 47701, 47713, 47737, 47779, 47791, 47797, 47881, 47911, 47917, 47947, 48049, 48091, 48109, 48259, 48337, 48463, 48571, 48619, 48661, 48673, 48733, 48751, 48757, 48781, 48799, 48847, 48871, 48889,\\ 48907, 48991, 49069, 49081, 49123, 49171, 49177, 49201, 49207, 49261, 49331, 49369, 49393, 49411, 49417, 49429, 49477, 49531, 49633, 49663, 49681, 49711, 49729, 49831, 49843, 49891, 49921, 49939, 50053, 50077, 50101, 50131, 50221, 50287, 50311, 50329, 50341, 50359, 50383, 50527, 50551, 50581, 50593, 50653, 50671, 50773, 50821, 50971, 50989, 51001, 51031, 51151, 51157, 51193, 51241, 51283, 51307, 51349, 51361, 51481, 51511, 51517, 51529, 51613, 51679, 51769, 51853, 51871, 51907, 51913, 51949, 51991, 52009, 52021, 52027, 52081, 52147, 52201, 52237, 52291, 52321, 52361, 52441, 52453, 52501, 52543, 52561, 52579, 52627, 52711, 52861, 52903, 52957, 52981, 53047, 53101, 53173, 53197, 53239, 53281, 53299, 53407, 53551, 53569, 53593, 53629, 53719, 53731, 53791, 53881, 53923, 54001, 54037, 54091, 54121, 54163, 54181, 54217, 54251, 54289, 54331, 54401, 54421, 54469, 54499, 54517, 54541, 54583, 54601, 54631, 54667, 54709, 54721, 54751, 54919, 54973, 54979, 55021, 55147, 55171, 55201, 55291, 55351, 55381, 55411, 55441, 55603, 55621, 55639, 55681, 55711, 55819, 55903, 55927, 55987, 56053, 56101, 56197, 56359, 56377, 56401, 56431, 56473, 56533, 56569, 56611, 56629, 56701, 56731, 56737, 56767, 56809, 56821, 56827, 56893, 56911, 56941, 56989, 57073, 57121, 57241, 57271, 57331, 57367, 57457, 57601, 57709, 57751, 57781, 57853, 57901, 58081, 58099, 58111, 58171, 58231, 58321, 58411, 58543, 58573, 58657, 58711, 58741, 58771, 58789, 58831, 59011, 59021, 59077, 59221, 59263, 59281, 59341, 59419, 59467, 59473, 59497, 59509, 59581, 59621, 59671, 59707, 59791, 59797, 59887, 59971, 60013, 60103, 60127, 60139, 60271, 60331, 60337, 60589, 60601, 60607, 60631, 60661, 60733, 60811, 60859, 60901, 60919, 60937, 61051, 61231, 61261, 61291, 61363, 61381, 61441, 61471, 61561, 61651, 61681, 61723, 61861, 61909, 61987, 62011, 62071, 62119, 62131, 62191, 62311, 62323, 62497, 62533, 62581, 62701, 62731, 62773, 62791, 62869, 62971, 62983, 63001, 63031, 63127, 63211, 63241, 63277, 63331, 63361, 63391, 63487, 63541, 63649, 63667, 63781, 63841, 63901, 64081, 64171, 64231, 64381, 64567, 64579, 64591, 64621, 64747, 64891, 64921, 64927, 64951, 65011, 65053, 65101, 65179, 65269, 65497, 65521, 65581, 65599, 65647, 65701, 65731, 65809, 65881, 65899, 66049, 66067, 66221, 66271, 66301, 66361, 66403, 66529, 66601, 66721, 66751, 66889, 66931, 66943, 67033, 67057, 67141, 67213, 67231, 67411, 67447, 67489, 67531, 67537, 67651, 67699, 67741, 67789, 67891, 68041, 68113, 68161, 68311, 68329, 68371, 68443, 68521, 68581, 68611, 68791, 68821, 68881, 69001, 69031, 69169, 69259, 69427, 69481, 69499, 69661, 69709, 69829, 69931, 70039, 70051, 70111, 70141, 70183, 70201, 70309, 70381, 70489, 70501, 70621, 70687, 70753, 70849, 70891, 70921, 70981, 71171, 71191, 71341, 71353, 71479, 71527, 71551, 71761, 71821, 71941, 71971, 71983, 72031, 72073, 72091, 72109, 72211, 72271, 72307, 72361, 72421, 72559, 72577, 72661, 72739, 72871, 72901, 72907, 72931, 72937, 72997, 73063, 73141, 73189, 73243, 73291, 73327, 73351, 73441, 73459, 73471, 73651, 73693, 73999, 74047, 74071, 74101, 74131, 74161, 74197, 74257, 74377, 74383, 74413, 74449, 74521, 74551, 74611, 74713, 74719, 74731, 74827, 74869, 74941, 75037, 75079, 75133, 75181, 75211, 75391, 75403, 75511, 75541, 75583, 75709, 75721, 75781, 75853, 75931, 76213, 76231, 76423, 76441, 76519, 76561, 76651, 76729, 76771, 77023, 77041, 77191, 77239, 77263, 77317, 77347, 77431, 77491, 77527, 77611, 77617, 77641, 77743, 77977, 78031, 78121, 78157, 78241, 78367, 78541, 78571, 78607, 78691, 78961, 79111, 79153, 79201, 79531, 79561, 79633, 79699, 79801, 79861, 80089, 80173, 80191, 80263, 80317, 80341, 80761, 80911, 81001, 81019, 81181, 81307, 81331, 81421, 81559, 81649, 81901, 82009, 82021, 82051, 82141, 82171, 82189, 82231, 82237, 82261, 82279, 82351, 82531, 82651, 82657, 82699, 82891, 82963, 83071, 83227, 83449, 83521, 83617, 83701, 83791, 83911, 84061, 84121, 84181, 84211, 84319, 84421, 84457, 84631, 84691, 84751, 84871, 84961, 84991, 85021, 85081, 85087, 85159, 85201, 85213, 85411, 85429, 85549, 85621, 85831, 85933, 86011, 86131, 86143, 86293, 86311, 86461, 86851, 86923, 87049, 87103, 87121, 87151, 87187, 87211, 87481, 87511, 87517, 87649, 87691, 87721, 87751, 87931, 87991, 88237, 88321, 88411, 88741, 89041, 89119, 89209, 89317, 89371, 89611, 89821, 89833, 89839, 90001, 90031, 90073, 90121, 90271, 90481, 90511, 90631, 90793, 90847, 90901, 90931, 91081, 91141, 91291, 91387, 91513, 91591, 91711, 91771, 91807, 91909, 91939, 92251, 92269, 92311, 92401, 92431, 92641, 92737, 92821, 92899, 92941, 93151, 93241, 93601, 93811, 93871, 93941, 94117, 94249, 94441, 94531, 94621, 95131, 95239, 95311, 95419, 95461, 95581, 95701, 95881, 95911, 95923, 96331, 96601, 96643, 96697, 96721, 96931, 97021, 97171, 97381, 97441, 97561, 97651, 97813, 97861, 97969, 98011, 98101, 98491, 98641, 98731, 98911, 98947, 99181, 99577, 99661, 99667, 99721, 99793, 99859, 99901, 99991, 100153, 100207, 100279, 100291, 100621, 100741, 100801, 100981, 101161, 101221, 101341, 101581, 101611, 102001, 102061, 102181, 102241, 102547, 102673, 103231, 103357, 103471, 103483, 103561, 103573, 103651, 103681, 103801, 103951, 104281, 104311, 104347, 104491, 104743, 105211, 105277, 105361, 105751, 106219, 106261, 106291, 106531, 106591, 106681, 106759, 106921, 107101, 107227, 107251, 107641, 107713, 107881, 108109, 108271, 108301, 108343, 108439, 108571, 108631, 108991, 109141, 109561, 109621, 109741, 109831, 109891, 110161, 110431, 110581, 110821, 110881, 111091, 111211, 111871, 112291, 112771, 112921, 113023, 113131, 113161, 113341, 113569, 113761, 113779, 114031, 114451, 114571, 114601, 114643, 114661, 115021, 115201, 115831, 116191, 116341, 116461, 116533, 116911, 117361, 117541, 117721, 117811, 117991, 118171, 118471, 118603, 118621, 118801, 118861, 118927, 119191, 119557, 119611, 119701, 119851, 119971, 120103, 120121, 120331, 120409, 120511, 120691, 120871, 121321, 121501, 121531, 121591, 121801, 121951, 122041, 122131, 122599, 122761, 122833, 123631, 123661, 124021, 124147, 124231, 124471, 124951, 125101, 125371, 125551, 125641, 125731, 125791, 126001, 126127, 126541, 126631, 126961, 127051, 127261, 127297, 128221, 128431, 128461, 128521, 128881, 129361, 130051, 130321, 130411, 131041, 131101, 131251, 131371, 131581, 131671, 132241, 132661, 132967, 133201, 133387, 133519, 133723, 133981, 134689, 134731, 135151, 135391, 135433, 135661, 135721, 135781, 136501, 136531, 136621, 136711, 136861, 137251, 137341, 137491, 138007, 138451, 138511, 139129, 139591, 139987, 140071, 140221, 140401, 140611, 140761, 141121, 141241, 141481, 142381, 143551, 143641, 143677, 143821, 144271, 144481, 145471, 145531, 145681, 145861, 146323, 146521, 146701, 146953, 147331, 148171, 148201, 148411,\\ 148471, 148501, 148891, 149689, 149731, 150151, 150301, 150697, 150991, 151141, 151321, 151381, 151531, 151561, 152041, 152083, 152461, 152791, 152821, 153001, 153271, 153871, 154081, 154981, 155521, 156061, 156151, 156241, 156601, 156781, 157081, 157231, 157411, 157543, 157609, 158341, 158761, 158941, 160651, 160801, 161071, 161221, 161281, 161911, 162451, 162691, 164011, 164341, 165601, 165811, 165961, 167281, 167311, 167491, 167521, 168151, 168211, 168499, 168541, 169003, 169471, 169831, 170101, 170641, 170881, 171091, 171271, 172279, 172981, 173191, 174241, 174721, 174931, 175561, 176023, 176551, 176887, 177841, 178831, 179281, 179461, 180181, 180391, 182701, 185131, 185221, 185641, 185761, 186391, 186481, 187471, 187489, 187651, 188701, 188911, 191251, 192259, 192271, 192721, 192781, 194371, 194581, 194671, 196561, 197341, 198901, 199411, 200341, 200971, 201601, 201961, 202231, 203221, 204751, 204931, 205111, 207307, 208261, 208891, 210421, 211051, 212521, 212671, 212851, 213481, 214831, 216217, 216901, 217081, 218611, 220411, 220861, 221671, 223381, 224011, 224071, 224401, 229681, 229771, 230311, 231331, 231481, 232381, 233641, 234271, 235621, 235951, 238681, 240571, 241081, 244351, 247501, 249211, 252541, 255511, 263071, 267961, 268291, 270271, 271261, 271441, 275881, 279001, 279841, 283861, 284131, 286651, 292231, 295831, 296011, 300301, 300511, 302941, 310501, 310591, 312841, 314161, 318691, 320401, 321031, 322921, 326041, 333271, 337411, 343261, 347131, 358801, 361351, 376471, 426871, 434281, 434611, 440911, 493021, 516961, 656371, 673201\\
\hline	
\end{longtable}}

\bibliographystyle{unsrt}
\bibliography{bib.bib}
\end{document}